\newtheorem{theorem}{Theorem}
\newtheorem{proposition}{Proposition}
\newtheorem{lemma}{Lemma}
\theoremstyle{definition}
\newtheorem{definition}{Definition}
\newtheorem{example}{Example}
\DeclareMathOperator{\Hom}{Hom}
\DeclareMathOperator{\Aut}{Aut}
\DeclareMathOperator{\wt}{wt}
\DeclareMathOperator{\Ker}{Ker}
\DeclareMathOperator{\M}{M}
\DeclareMathOperator{\Sym}{Sym}
\newcommand{\Image}[2]{{#1}({#2})}
\newcommand{\dual}[1]{{{#1}^*}}
\newcommand{\ch}[1]{\hat{#1}}
\newcommand{\duale}[1]{{{#1^*}}}
\newcommand{\che}[1]{{\hat{{#1}}}}
\newcommand{\comment}[1]{}
\newcommand{\card}[1]{|#1|}
\renewcommand{\o}{\omega}
\newcommand{\myset}[1]{\{1,\dots,#1\}}
\newcommand{\id}{\mathbbm{1}}
\newcommand{\U}{\mathcal{U}}
\newcommand{\V}{\mathcal{V}}
\title{On extendibility of additive code isometries}
\author{Serhii Dyshko
	\thanks{Electronic address: \texttt{dyshko@univ-tln.fr}}}
\affil{Institut de math\'ematiques de Toulon, Universit\'e de Toulon, France}
\begin{document}
	\maketitle

	\begin{abstract}
		For linear codes, the MacWilliams Extension Theorem states that each linear isometry of a linear code extends to a linear isometry of the whole space.
		But, in general, it is not the situation for nonlinear codes.
		In this paper it is proved, that if the length of an additive code is less than some threshold value, then an analogue of the MacWilliams Extension Theorem holds.
		One family of unextendible code isometries for the threshold value of code length is described.
	\end{abstract}

	\section{Introduction}
		
		The main objective of the coding theory is to study the metric structure of a code. Therefore the classification of code isometries is vital for the completeness of the theory.
		
		\comment{Besides the metric, a code can have additional algebraic structure. The most developed are linear codes.} There is a full description of linear code isometries in a Hamming space.
		The famous MacWilliams Extension Theorem claims that each linear isometry of a linear code can be extended to a linear isometry of the full space.
		
		The description of isometries in terms of extendibility is very convenient, because the classification of all isometries of the full space, both linear and nonlinear, have already been done, for example, in \cite{isomconf}.
		
		Unfortunately, in the case where the linearity of a code is not required, the situation is more complicated. There are nonlinear codes with isometries that do not extend to isometries of the whole space.
		
		In general, it is a difficult task to describe codes, which have only extendible isometries.
		Nevertheless, considering some classes of codes, this problem can be solved in particular cases.
		For example, in \cite{aug1}, \cite{heise} and \cite{kov1} authors described several families of nonlinear codes with all isometries extendible. There they also observed various classes of codes that have unextendible isometries.
		Among the studied families there are some subclasses of codes that achieve the Singleton bound (MDS codes, see \cite[p.~20]{macwilliams}), some subclasses of codes with equal distance between codewords (equidistant codes) and some perfect codes (see \cite[Ch.~\S 11]{macwilliams}).
		
		In this paper, we focus our attention on additive codes and the extendibility of additive code isometries. An additive code is a code that forms a group under addition. An additive isometry of an additive code is an isometry that is a group isomorphism.
		The importance of these codes is due to the fact that additive codes with additional requirement of a special kind of self-orthogonality naturally describe quantum stabilizer codes (see \cite{gruska}).
		
		The results presented in the paper are the following. \Cref{theorem-less-then-m} determines the threshold value of the code length for which an analogue of the MacWilliams Extension Theorem for additive codes holds. By providing \Cref{counter-example-existence}, we proved that in general this result cannot be improved by increasing the bound on the code length.
		
\comment{In \cite{exaut} the author described the general ideas for finding additive isometries of additive codes that are automorphisms, i.e., isometry of the code to itself. The results presented in this paper show that for codes of small length all these automorphisms extend to automorphisms of the whole space. Some examples and facts, based on the theory developed in this paper, will be covered in our future works.
In a forthcoming paper we will also study in details extendibility of the additive code isometries of codes of threshold length.}

	\section{Additive codes and additive isometries}
\comment{
		Error-correcting codes are used to reliably transmit data over a noisy communication channel. The noise in the channel can erase information in a message or change it. To protect it, the idea is to encode the message by adding additional information, which in case of corruption should help to recover it. This can be done by dividing data into parts and encoding each part into a codeword separately. The set of all possible codewords is called a block code, or simply a code. The codeword can be changed while transmitting and differ slightly from the original one. To recover the corrupted codeword, one has to find the most \enquote{similar} among all elements in the code.

		Let $L$ be a finite field and let $m$ be a positive integer. The \textit{Hamming weight} is a function $\wt : L^m \rightarrow \{0,\dots,m\}$ that counts the number of nonzero coordinates in vector, $\wt(x) = \card{\{ i \mid x_i \neq 0 \}}$, for $x \in L^m$. The \textit{Hamming distance} is defined as $\rho: L^m \times L^m \rightarrow \{0,\dots,m\}$, $\rho(x,y) = \wt(x-y)$, and it is a metric. The space $L^m$ with the Hamming metric is called the \textit{Hamming space}. A \textit{code} $C$ is a subset of the Hamming space $L^m$. The finite field $L$ is called the \textit{alphabet} and the number $m$ is called the \textit{length} of a code. Elements of $C$ are called the \textit{codewords}. An \textit{isometry} of a code $C \subseteq L^m$ is a map $f:C \rightarrow L^m$ that preserves the Hamming metrics.

		The procedure of error correction consists in finding the closest element in $C$ for a given vector in $L^m$. In general, it is equivalent to the exhaustive search in a code. It may be simplified, if we assume that the code has some additional properties. For example, the most developed are \emph{linear} codes, i.e. such codes that are $L$-linear subspaces of $L^m$.
}
		Let $L^m$ be a Hamming space.
		There is a full description of linear isometries of linear codes in $L^m$. The map $f: L^m \rightarrow L^m$ is called \emph{monomial} if it acts by permutation of coordinates and multiplications of coordinates by nonzero scalars.
		
		\begin{theorem}[MacWilliams Extension Theorem, see \cite{wood}]\label{thm-mac-william}
			Let $C \subseteq L^m$ be a linear code. Each linear isometry of $C$ extends to a monomial map.
		\end{theorem}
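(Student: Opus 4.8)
The plan is to pass to generator matrices and reduce the claim to a combinatorial identity about the columns, which is then read off from Fourier inversion on the group $(L^k,+)$, where $k=\dim_L C$. Put $q=\card{L}$ and fix a linear isomorphism $\alpha\colon L^k\to C$, so that $\alpha(x)=xG$ for a $k\times m$ generator matrix $G$ of $C$ and $(f\circ\alpha)(x)=xH$ for a $k\times m$ matrix $H$. Since $f$ preserves the Hamming distance and $f(0)=0$, it preserves the Hamming weight, so $\wt(xG)=\wt(xH)$ for all $x\in L^k$; and $f$ is injective, so $H$ has rank $k$ and is a generator matrix of $f(C)$. Write $g_1,\dots,g_m$ and $h_1,\dots,h_m$ for the columns of $G$ and $H$. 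I will show that these two multisets of vectors of $L^k$ coincide up to rescaling each vector by a nonzero scalar; equivalently, that $m_G(\ell)=m_H(\ell)$ for every one-dimensional subspace (line) $\ell\subseteq L^k$, where $m_G(\ell)$ is the number of columns of $G$ lying on $\ell$, and that the numbers $z_G,z_H$ of zero columns agree.

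The heart of the matter is to extract this from the equalities $\wt(xG)=\wt(xH)$. Fix a nontrivial additive character $\psi$ of $L$; using $\mathbbm{1}[t=0]=q^{-1}\sum_{a\in L}\psi(at)$ and splitting the sum over $a$ into $a=0$ and $a\neq0$ (noting that $a\mapsto ag_j$ runs over the punctured line through a nonzero $g_j$), one obtains
\[
\wt(xG)\;=\;m\bigl(1-\tfrac1q\bigr)-\tfrac1q\sum_{v\in L^k}\mu_G(v)\,\psi(x\cdot v),
\]
where $\mu_G$ is the nonnegative function on $L^k$ with $\mu_G(v)=m_G(\langle v\rangle)$ for $v\neq0$ and $\mu_G(0)=(q-1)z_G$. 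The same identity holds for $H$, so $\wt(xG)=\wt(xH)$ for all $x$ says precisely that the $\psi$-Fourier transforms of $\mu_G$ and $\mu_H$ agree; since the characters $x\mapsto\psi(x\cdot v)$, $v\in L^k$, are linearly independent, $\mu_G=\mu_H$. Evaluating on punctured lines gives $m_G(\ell)=m_H(\ell)$ for every line, and evaluating at $0$ gives $z_G=z_H$. (This step can alternatively be carried out combinatorially, using the nonsingularity over $\mathbb{Q}$ of the point--hyperplane incidence matrix of the projective space $\mathbb{P}(L^k)$.)

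It remains to build the monomial extension. Using $z_G=z_H$ and $m_G(\ell)=m_H(\ell)$, choose a permutation $\sigma$ of $\{1,\dots,m\}$ that maps $\{j : h_j=0\}$ onto $\{i : g_i=0\}$ and, for each line $\ell$, maps $\{j : h_j\in\ell\setminus\{0\}\}$ onto $\{i : g_i\in\ell\setminus\{0\}\}$. For a nonzero $h_j$ the vectors $h_j$ and $g_{\sigma(j)}$ span the same line, hence $h_j=c_jg_{\sigma(j)}$ for a unique $c_j\in L^{*}$; set $c_j=1$ when $h_j=0$. Then $\phi\colon L^m\to L^m$, $(\phi(v))_j=c_j\,v_{\sigma(j)}$, is a monomial map, and for each codeword $c=xG\in C$,
\[
(\phi(c))_j=c_j\,(x\cdot g_{\sigma(j)})=x\cdot(c_jg_{\sigma(j)})=x\cdot h_j=(xH)_j=(f(c))_j ,
\]
so $\phi$ extends $f$.

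I expect the Fourier step to be the main obstacle: one has to organize the column multiplicities carefully — in particular, handle zero columns and proportional columns correctly — so that $x\mapsto\wt(xG)$ really is, up to an additive constant, a character sum on $L^k$. Once this is done, linear independence of characters closes the gap and the construction of $\phi$ is routine bookkeeping.
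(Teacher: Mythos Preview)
Your argument is correct and is essentially the same character-theoretic proof the paper uses: the paper's \Cref{thm-weight-representation} and \Cref{isometry-main-criterium} express $\wt(\lambda(u))$ as a character sum with coefficients $\sum_i\card{V_i}^{-1}\id_{V_i}$ and then invoke linear independence of characters, which in the case $K=L$ (so each $V_i$ is a line or $\{0\}$) is exactly your identity $\mu_G=\mu_H$; your $\mu_G$ is, up to normalisation, the paper's $\sum_i\card{V_i}^{-1}\id_{V_i}$. The only packaging difference is that the paper obtains the statement by specialising its general column-space framework for $K\subseteq L$ to $n=[L:K]=1$, whereas you carry out the $K=L$ computation directly and build the monomial map by hand.
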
 
		
\comment{
		The linear structure allows the usage of algebraic methods in finding and description of good codes. 
		Since additive codes are linear in weaker sense ($\mathbb{F}_p$-linear for some prime $p$), some tools of linear algebra can be used.
}

\comment{		When $C$ is a $K$-linear code and $f$ is a $K$-linear map, the image $f(C)$ is a $K$-linear code.}

\comment{
		In \cite{categoryofcodes} there is presented a categorical approach to the notion of linear code isometries. By analogy, we can define the category of $K$-linear codes in $L^m$ (for some fixed $K$). The objects of the category are $K$-linear codes, and morphisms are $K$-linear contracting maps between codes (for every $x \in C$, $\wt(f(x)) \leq \wt(x)$). The categorical \textit{isomorphisms} of $K$-linear codes is a $K$-linear isometry between two codes.	
}

		The MacWilliams Extension Theorem claims that any linear isometry of a linear code can be extended to a linear isometry of the whole space.		
		A general analogue of the MacWilliams Extension Theorem does not exist for nonlinear codes. This means that there exists a nonlinear code and there exists an isometry of this code that does not extend to an isometry of the whole space. In \cite{isomconf} the author gives a full description of the isometries of the ambient space and in \cite{aug1} there is given such an example of unextendible code isometry.
		
		\begin{theorem}[see \cite{isomconf}]\label{thm-code-isomeorphy}
			Let $F$ be a finite set with at least two elements and let $m$ be a positive integer. A map $f : F^m \rightarrow F^m$ is an isometry if and only if there exist a permutation $\pi \in S_m$ and permutations $\sigma_1, \dots, \sigma_m \in \Sym(F)$ such that for any $x = (x_1,\dots, x_m) \in F^m$,
			\begin{equation*}
				f\big( (x_1,\dots, x_m) \big) = \big( \sigma_1(x_{\pi(1)}), \dots, \sigma_m(x_{\pi(m)}) \big) \;.
			\end{equation*}
		\end{theorem}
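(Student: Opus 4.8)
\emph{Proof proposal.} The equivalence splits into an easy direction and a substantial one. For the ``if'' direction, suppose $f$ has the stated form. For $x,y\in F^m$, injectivity of each $\sigma_i$ gives $f(x)_i\ne f(y)_i \iff x_{\pi(i)}\ne y_{\pi(i)}$, so the set of coordinates in which $f(x)$ and $f(y)$ differ is exactly the $\pi^{-1}$-image of the set of coordinates in which $x$ and $y$ differ. These sets have equal cardinality, hence $\rho(f(x),f(y))=\rho(x,y)$ and $f$ is an isometry.

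For the converse, first note that a distance-preserving self-map of the finite set $F^m$ is automatically injective, hence bijective. Fix a reference symbol $a\in F$ and write $\mathbf a=(a,\dots,a)$. Replacing $f$ by the isometry $g$ defined by $g(x)_i=\tau_i(f(x)_i)$, where $\tau_i\in\Sym(F)$ is chosen so that $\tau_i(f(\mathbf a)_i)=a$, we may assume $g(\mathbf a)=\mathbf a$; any decomposition of $g$ of the desired type yields one for $f$ after composing coordinatewise with the $\tau_i^{-1}$, which are again permutations. For $x\in F^m$ call $\{i: x_i\ne a\}$ its \emph{support}; since $g$ preserves the distance to the fixed point $\mathbf a$, it preserves the size of the support, and in particular it maps the sphere $S_1$ of vectors with support of size $1$ bijectively onto itself.

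The first key step is to analyze $g$ on $S_1$. Write $v_{j,c}$ for the vector equal to $\mathbf a$ except for the symbol $c$ in position $j$, so $S_1=\bigsqcup_{j=1}^m L_j$ with $L_j=\{v_{j,c}: c\ne a\}$. One checks that the relation ``$\rho(u,v)\le 1$'' on $S_1$ is an equivalence relation whose classes are precisely the ``lines'' $L_j$ (two vectors in the same line differ in one coordinate, two vectors in distinct lines differ in two). Since $g|_{S_1}$ preserves $\rho$, it permutes these classes, giving a permutation $\mu\in S_m$ with $g(L_j)=L_{\mu(j)}$; restricting $g$ to the bijection $L_j\to L_{\mu(j)}$ and extending by $a\mapsto a$ yields $\sigma_j\in\Sym(F)$ with $g(v_{j,c})=v_{\mu(j),\sigma_j(c)}$ for all $c\in F$.

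It remains to prove $g(x)_{\mu(j)}=\sigma_j(x_j)$ for every $x\in F^m$, which I would do by induction on the support size $w$, the cases $w\le 1$ being exactly what was just established. For $w\ge 2$ let $x$ have support $S$; for each $j\in S$ the vector $x^{(j)}$ obtained by resetting coordinate $j$ to $a$ has support $S\setminus\{j\}$, lies at distance $1$ from $x$, and by the inductive hypothesis $g(x^{(j)})$ is known explicitly. Using that $g(x)$ lies at distance $1$ from each $g(x^{(j)})$ and has support of size $w$, one first forces the support of $g(x)$ to be $\mu(S)$, and then, by comparing the information coming from two distinct indices $j\ne j'$ of $S$, obtains $g(x)_{\mu(k)}=\sigma_k(x_k)$ for all $k\in S$ (while for $k\notin S$ one has $\mu(k)\notin\mu(S)$, so $g(x)_{\mu(k)}=a=\sigma_k(x_k)$). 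Setting $\pi=\mu^{-1}$ and relabeling the $\sigma_i$ accordingly completes the proof. The reduction and the radius-$1$ analysis are routine; the main obstacle is this last inductive step, namely extracting the full coordinatewise formula from the purely metric data carried by the distance-$1$ neighbours $x^{(j)}$, and in particular verifying that the single permutation $\mu$ and the single family $(\sigma_j)$ found on $S_1$ govern $g$ on all of $F^m$.
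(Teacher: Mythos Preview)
Your argument is correct. Note, however, that the paper does not give its own proof of this theorem: it is stated with a reference to \cite{isomconf} and used as a black box, so there is nothing in the paper to compare your route against.

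As for the part you flag as the main obstacle, the inductive step does close exactly as you sketch. With $|S|=w\ge 2$, the support of $g(x)$ is forced to be $\mu(S)$: from $\rho(g(x),g(x^{(j)}))=1$ and the support counts one sees $g(x)$ and $g(x^{(j)})$ differ in a single position $p_j\notin\mu(S\setminus\{j\})$; if $p_j\neq\mu(j)$ then $p_j\notin\mu(S)$, so the support of $g(x)$ omits $\mu(j)$, while for any other $j'\in S$ the analogous description of the support contains $\mu(j)$, a contradiction. Hence $p_j=\mu(j)$ for every $j\in S$, the support of $g(x)$ is $\mu(S)$, and $g(x)$ agrees with $g(x^{(j)})$ off $\mu(j)$. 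For each $k\in S$ choose $j'\in S\setminus\{k\}$ (possible since $w\ge 2$) to read off $g(x)_{\mu(k)}=g(x^{(j')})_{\mu(k)}=\sigma_k(x_k)$; for $k\notin S$ the equality is immediate from the support. The final substitution $\pi=\mu^{-1}$ and relabeling $\sigma_i\mapsto\sigma_{\pi(i)}$ (followed by undoing the normalising $\tau_i$) is correct.
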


		\begin{example}[see \cite{aug1}]
			Suppose $F = \{0,1\}$. Two codes in $F^4$
			\begin{equation*}
			C = \{ (0,0,0,0), (1,1,0,0), (1,0,1,0), (0,1,1,0) \}
			\end{equation*}
			and 
			\begin{equation*}
			D = \{ (0,0,0,0), (1,1,0,0), (1,0,1,0), (1,0,0,1)\}
			\end{equation*}
			are isometric, i.e. there exists an isometry $f: C \rightarrow D$. Indeed, in both codes the distance between two different codewords is $2$, thus any bijection $f: C \rightarrow D$ is an isometry.
			For any position, there exists two different codewords in $D$ that have different values in this position.
			But all the codewords in $C$ have equal values on the fourth position.
			According to \Cref{thm-code-isomeorphy}, any isometry between these two codes
			cannot be extended to an isometry of the space $F^4$.
		\end{example}
		
		As we have already noted in the introduction, the studying of the extendibility property for code isometries in general is difficult and only a few families of codes and their isometries have been properly described. 
		In this paper we focus our attention on the extendibility of additive code isometries.
	
		A code in $L^m$ is called \emph{additive} if it is an additive subgroup of $L^m$.
		An \emph{additive isometry} of an additive code $C$ is an isometry that is a group homomorphism. Evidently, a map $f$ is an additive isometry if and only if $f$ preserves the Hamming weight.
		
		Let $K$ be a subfield of $L$. Along with additive codes we will speak about \emph{$K$-linear codes}, i.e. codes that are $K$-linear subspaces of $L^m$. The notions of additive and $K$-linear codes in $L^m$ are in some sense equivalent.
		Any $K$-linear code is additive and, in the other way, any additive code is $\mathbb{F}_p$-linear, where $p$ is the characteristic of $L$. If $K = L$, a $K$-linear code is linear.
	 	Obviously, any $K$-linear isometry is additive and any additive isometry is $\mathbb{F}_p$-linear. 
		
		\begin{example}\label{ex-1}
			Consider two codes $C_1 = \{ (0, 0, 0), (1,1,0), (\o,0,1), (\o^2,1,1) \}$ and $C_2 = \{(0, 0, 0), (0,\o^2,\o), (1,0,1), (1,\o^2,\o^2) \}$ in $\mathbb{F}_4^3$, where $\mathbb{F}_4 = \{ 0, 1, \omega, \omega^2\}$ and $\omega + 1 = \omega^2$. All the codes are $\mathbb{F}_2$-linear.
			Define a map $f: C_1 \rightarrow C_2$ in the following way:
			$f\big((0,0,0)\big) = (0,0,0)$, $f\big((1,1,0)\big) = (0,\o^2,\o)$, $f\big((\o,0,1)\big) = (1,0,1)$ and $f\big((\o^2,1,1)\big) = (1,\o^2,\o^2)$. Evidently, the map $f$ is $\mathbb{F}_2$-linear and it preserves the Hamming weight. Therefore $f$ is an $\mathbb{F}_2$-linear isometry of the $\mathbb{F}_2$-linear code $C_1$ in $\mathbb{F}_4^3$. Both codes $C_1$ and $C_2$ are not $\mathbb{F}_4$-linear.
		\end{example}
		
		Our main objects of study are the $K$-linear isometries of $K$-linear codes. We begin with the description of all $K$-linear isometries of $L^m$.
		The field $L$ can be observed as a finite-dimensional vector space over $K$. By $\Aut_K(L)$ we denote the group of all $K$-linear invertible maps from $L$ to itself.
		\begin{definition}\label{def-general-monomial}
			A map $f: L^m \rightarrow L^m$ is called \emph{$K$-monomial} if there exist a permutation $\pi \in S_m$ and automorphisms $g_1, \dots, g_m \in \Aut_K(L)$ such that for all $u \in L^m$,
			\begin{equation*}
				f(u) = f\big((u_1, u_2, \dots, u_m)\big) = \big(g_1 (u_{\pi(1)}), g_2(u_{\pi(2)}), \dots, g_m( u_{\pi(m)})\big)\;.
			\end{equation*}
		\end{definition}
		
		\begin{proposition}\label{thm-description-of-space-isometries}
			A map $f: L^m \rightarrow L^m$ is $K$-monomial if and only if it is a $K$-linear isometry.
		\end{proposition}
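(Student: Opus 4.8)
The plan is to prove the two implications separately; the direction ``$K$-monomial $\Rightarrow$ $K$-linear isometry'' is routine. If $f$ is $K$-monomial with permutation $\pi \in S_m$ and automorphisms $g_1, \dots, g_m \in \Aut_K(L)$, then $f$ is $K$-linear because permuting coordinates is $K$-linear and each $g_i$ is $K$-linear, and it is bijective with inverse again $K$-monomial. Since every $g_i$ is a bijection with $g_i(0) = 0$, the $i$-th coordinate of $f(u)$ vanishes if and only if $u_{\pi(i)} = 0$; hence $\wt(f(u)) = \wt(u)$ for all $u$, and then $\wt(f(u) - f(v)) = \wt(f(u-v)) = \wt(u-v)$, so $f$ preserves the Hamming metric, i.e.\ $f$ is a $K$-linear isometry.

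For the converse, let $f \colon L^m \to L^m$ be a $K$-linear isometry. First I would record two elementary facts: $f$ is a bijection (a distance-preserving self-map of the finite set $L^m$ is injective, hence bijective), and $f$ preserves the Hamming weight, since $f(0) = 0$ gives $\wt(f(u)) = \wt(f(u) - f(0)) = \wt(u - 0) = \wt(u)$; the same holds for $f^{-1}$. The heart of the argument is a combinatorial description of the weight-$\le 1$ vectors. For $i \in \{1,\dots,m\}$ let $V_i = \{u \in L^m : u_j = 0 \text{ for all } j \neq i\}$; this is a $K$-subspace of $L^m$, isomorphic to $L$ via $u \mapsto u_i$, so $\dim_K V_i = [L:K]$. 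Then $S := \{u \in L^m : \wt(u) \le 1\} = \bigcup_{i=1}^m V_i$, and, crucially, every $K$-subspace $W \subseteq S$ is contained in a single $V_i$: if $W$ contained nonzero vectors $w \in V_i$ and $w' \in V_j$ with $i \neq j$, then $w + w' \in W \subseteq S$ would have weight $2$.

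Now I would use that $f$, being a weight-preserving bijection, satisfies $f(S) = S$. Hence for each $j$ the image $f(V_j)$ is a $K$-subspace contained in $S$, so $f(V_j) \subseteq V_k$ for some $k$; comparing $K$-dimensions forces $f(V_j) = V_k$. The resulting assignment $j \mapsto k$ is a permutation of $\{1,\dots,m\}$ (it is injective because $f$ is), and I would write $\pi$ for its inverse, so that $f(V_{\pi(i)}) = V_i$ for every $i$. Define $g_i \in \Aut_K(L)$ to be the composite $L \xrightarrow{\sim} V_{\pi(i)} \xrightarrow{f} V_i \xrightarrow{\sim} L$ of $f$ with the two coordinate isomorphisms; it is $K$-linear and bijective. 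Finally, using the direct-sum decomposition $L^m = \bigoplus_{j} V_j$ and the $K$-linearity of $f$, write $u = \sum_j u^{(j)}$ with $u^{(j)} \in V_j$ the vector supported on coordinate $j$ with value $u_j$; then $f(u) = \sum_j f(u^{(j)})$, where $f(u^{(j)}) \in V_{\pi^{-1}(j)}$ has value $g_{\pi^{-1}(j)}(u_j)$ at that coordinate. Reading off the $i$-th coordinate (only the term $j = \pi(i)$ contributes) gives $f(u)_i = g_i(u_{\pi(i)})$, so $f$ is $K$-monomial, as required.

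I do not expect a genuine obstacle. The only points requiring care are turning the hypothesis ``isometry'' into the working statement ``weight-preserving bijection'', and the clean identification of $\{V_1,\dots,V_m\}$ as the intrinsic family of subspaces that any such $f$ must permute; once that permutation $\pi$ is in hand, $K$-linearity propagates the formula from the $V_i$ to all of $L^m$ mechanically, and the bookkeeping with $\pi$ and $\pi^{-1}$ is the most error-prone part.
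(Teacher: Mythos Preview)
Your proof is correct. The paper, however, takes a much shorter route: it simply invokes \Cref{thm-code-isomeorphy}, the classification of \emph{all} Hamming isometries of $F^m$ as maps of the form $x \mapsto (\sigma_1(x_{\pi(1)}),\dots,\sigma_m(x_{\pi(m)}))$ with $\sigma_i \in \Sym(F)$, and then notes that if such a map is $K$-linear then $f(0)=0$ and the $\sigma_i$ (read off from the action on the coordinate axes) are $K$-linear bijections of $L$, i.e., elements of $\Aut_K(L)$. Your argument is self-contained and bypasses \Cref{thm-code-isomeorphy} entirely: the observation that any $K$-subspace of the weight-$\le 1$ set $S$ lies in a single coordinate axis $V_i$ lets you extract the permutation $\pi$ directly from the $K$-linear structure, in effect re-proving the needed special case of \Cref{thm-code-isomeorphy} (which is actually easier than the general nonlinear statement). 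The paper's approach is terser because it outsources the combinatorics; yours is more elementary and would stand on its own in a setting where \Cref{thm-code-isomeorphy} is not already available.
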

		\begin{proof}
			The only if part is obvious.
			In the other direction, use \Cref{thm-code-isomeorphy}. Since $K$-linear permutations of $L$ are exactly elements of $\Aut_K(L)$, any $K$-linear isometry is a $K$-monomial map.
		\end{proof}
		
		We call a $K$-linear code isometry \emph{extendible} if it is a restriction of a $K$-monomial map on the code. Otherwise, we call it \emph{unextendible}. The following example shows an unextendible additive code isometry.
		
		\begin{example}\label{counter-example-existence}
		Let $K \subset L$ be a pair of finite fields, $m = \card{K} + 1$ and $\o \in L \setminus K$. Consider two $K$-linear codes $C_1 = \langle v_1, v_2 \rangle_K$ and $C_2 = \langle u_1, u_2 \rangle_K$ in $L^m$ with
		\begin{equation*}
		\left(
			\begin{matrix}
				v_1 \\
				v_2
			\end{matrix}\right) =
		\left(
			\begin{matrix}
				1& 1& \dots & 1 & 0\\
				x_1 & x_2 & \dots & x_{\card{K}} & 1
			\end{matrix}\right)
			\xrightarrow{f}
		\left(
			\begin{matrix}
				1& 1& \dots & 1& 0\\
				\o & \o & \dots & \o& 0
			\end{matrix}\right) =
		\left(
			\begin{matrix}
				u_1 \\
				u_2
			\end{matrix}\right)\;,
		\end{equation*}
		where $x_i \in K$ are all different.
		The $K$-linear map $f: C_1 \rightarrow C_2$, defined by $f(v_1) = u_1$ and $f(v_2) = u_2$, is an isometry.
		Indeed, let $\alpha v_1 + \beta v_2$ be an arbitrary element in $C_1 \setminus\{0\}$, where $\alpha, \beta \in K$. If $\beta = 0$, then $\wt(\alpha v_1 + \beta v_2) = m -1$. If $\beta \neq 0$ then the equation $\alpha + \beta x_i = 0$, where $i \in \myset{\card{K}}$, has exactly one solution $x_i = - \alpha \beta^{-1}\in K$ and thus $\wt(\alpha v_1 + \beta v_2) = m -1$. Therefore, all nonzero elements in $C_1$ have the weight equal to $m - 1$. It is easy to see that all nonzero codewords in $C_2$ also have the weight $m - 1$. The map $f$ maps nonzero elements of $C_1$ to nonzero elements of $C_2$ and hence is an isometry.
		At the same time, there is no $K$-monomial map that acts on $C_1$ in the same way as $f$. The last coordinates of all vectors in $C_2$ are always zero, but there is no such all-zero coordinate in $C_1$.
		\end{example}

	\section{Column spaces}
		Let $K$ be a finite field and let $U$ and $L$ be $K$-linear vector spaces over $K$ of dimensions $k$ and $n$ respectively. Fix bases in $U$ and $L$ and let $b_1, \dots, b_n$ be a basis of $L$ over $K$. For simplicity assume that $L$ is a finite field and $K$ is a subfield of $L$.
		
		Denote by $\M_{a \times b}(F)$ the set of all $a \times b$ matrices with the entries from a field $F$. Let $A \in \M_{k \times n}(L)$ be a matrix and let $v \in L^k$ be a column of $A$.
		Suppose $v_1, \dots, v_n \in U$ is the expansion of $v$ in the basis $b_1, \dots, b_n$. This means that $v = \sum_{i = 1}^n b_i v_i$, where the multiplication is component-wise. Define a \emph{column space} $V \subseteq U$ of the vector $v$ as the $K$-linear span $V = \langle v_1, \dots, v_n \rangle_K$. The definition of a column space does not depend on the choice of a basis of $L$ over $K$.
		Call $\V = (V_1, \dots, V_m)$ the \emph{tuple of spaces} of $A$, where $V_i$ denotes the column space of $i$th column of $A$, for $i \in \myset{m}$. 
		
\comment{THIS IS IMPORTANTReally, suppose $b'_1, \dots, b'_n$ is another basis and $g \in \Aut_K(L)$ is such that $g(b'_i) = b_i$, for $i \in \myset{n}$. In the new basis $v = \sum_{i=1}^n b_i'v_i'$ and the column space of $v$ is $V' = \langle v_1',\dots, v_n'\rangle_K$.
		Let $\sigma \in \Hom_K(L,U)$ be such that $\sigma(b_i) = v_i$. Then $V = \sigma(L)$ and $V' = (\sigma g)(L) = \sigma (g(L)) = \sigma(L)= V$.
		}
		
		\begin{example}\label{ex-2}
			Consider the finite field $\mathbb{F}_4 = \{ 0, 1, \omega, \omega^2\}$, where $\omega + 1 = \omega^2$.
			The matrix $A \in \M_{3 \times 3}(\mathbb{F}_4)$,
			\begin{equation*}
				A = \left(\begin{matrix}
					1 & 1& 0\\
					\o & \o & 0 \\
					1 & 0 & 1
				\end{matrix}\right)\;,
			\end{equation*}
			has the following expansion of columns in the $\mathbb{F}_2$-linear basis $1,\o$ of $\mathbb{F}_4$,
			\begin{equation*}
				\left( \begin{matrix} 1 \\ \o \\ 1 \end{matrix} \right) = \left( \begin{matrix} 1 \\ 0 \\ 1 \end{matrix} \right) + \o\left( \begin{matrix} 0 \\ 1 \\ 0 \end{matrix} \right)\;;
				\left( \begin{matrix} 1 \\ \o \\ 0 \end{matrix} \right) = \left( \begin{matrix} 1 \\ 0 \\ 0 \end{matrix} \right) + \o\left( \begin{matrix} 0 \\ 1 \\ 0 \end{matrix} \right)\;;
				\left( \begin{matrix} 0 \\ 0 \\ 1 \end{matrix} \right) = \left( \begin{matrix}  0\\ 0 \\ 1 \end{matrix} \right) + \o\left( \begin{matrix} 0 \\ 0 \\ 0 \end{matrix} \right)\;.
			\end{equation*}
			The column spaces $V_1,V_2,V_3 \subseteq \mathbb{F}_2^3$ are: $V_1 = \langle (1,0,1) , (0,1,0) \rangle_{\mathbb{F}_2}$, $V_2 = \langle (1,0,0) , (0,1,0) \rangle_{\mathbb{F}_2}$ and $V_3 = \langle (0,0,1) \rangle_{\mathbb{F}_2}$.
		\end{example}
		For two vector spaces $U,L$ over a field $K$ we denote by $\Hom_K(U,L)$ the set of all $K$-linear maps from $U$ to $L$. 	
		
		Suppose $\sigma \in \Hom_K (U,L)$.
		There exists a unique matrix $M \in \M_{k \times n}(K)$ such that for all $a \in U$, $\sigma(u) = M^T u$. Define the \emph{dual map} $\dual{\sigma} \in \Hom_K(L,U)$ as $\dual{\sigma}(b) = M b$ for all $b \in L$. Evidently, $\sigma^{**} = \sigma$.
		Let $X$ be another vector space over $K$. Suppose $\sigma_1 \in \Hom_K(U,L)$ and $\sigma_2 \in \Hom_K(L,X)$. Then $(\sigma_2\sigma_1)^* = \sigma_1^* \sigma_2^*$. Note that if $g \in \Aut_K(L)$, then also $g^* \in \Aut_K(L)$.
		
		The matrix $A \in \M_{k \times m}(L)$ naturally defines a map $\lambda \in \Hom_K(U, L^m)$, $\lambda(u) = A^T u$, where $u \in U$. We present $\lambda$ in the form $\lambda = (\lambda_1, \dots, \lambda_m)$, where $\lambda_i(u)$ is the projection of $\lambda(u)$ on $i$th coordinate, $i \in \myset{m}$, $u \in U$. Obviously, $\lambda_i \in \Hom_K(U, L)$, for $i \in \myset{m}$, and it corresponds to the $i$th column of $A$.
		One can see that for all $i \in \myset{m}$, $\duale{\lambda_i}(L) = V_i$, where $\V = (V_1,\dots,V_m)$ is the tuple of spaces of $A$.

		Let $f: \lambda(U) \rightarrow L^m$ be a $K$-linear map. Define a map $\mu = f\lambda \in \Hom_K(U,L^m)$.
		The following diagram is commutative,
		\begin{equation*}
		\begin{tikzcd}
			U \arrow{r}{\mu} \arrow{dr}[swap]{\lambda} & L^m\\
			{} & L^m \arrow[swap]{u}{f}
		\end{tikzcd}
		\end{equation*}
\comment{		
	\begin{CD}
		U @>\mu>> L^m\\
		@AAidA @AAfA\\
		U @>\lambda>> C
	\end{CD}
}
		Suppose $A' \in \M_{k \times m}(L)$ is such that $\mu(u) = A'^T u$ for all $u \in U$. Let $\U = (U_1, \dots, U_m)$ be the tuple of spaces of $A'$. Note that for any $i \in \myset{m}$, $U_i = \mu_i^*(L)$.
		
		Call two tuples of spaces $\U= (U_1,\dots,U_m)$ and $\V=(V_1,\dots,V_m)$ \emph{equivalent} and denote $\U \sim \V$, if there exists a permutation $\pi \in S_m$ such that $V_i = U_{\pi(i)}$ for all $i \in \myset{m}$.
		\begin{lemma}\label{lemma-equal-column-spaces}
			Let $\sigma,\tau \in \Hom_K(U,L)$. There exists $g \in \Aut_K(L)$ such that $\sigma = g\tau$ if and only if $\sigma^*(L) = \tau^*(L)$.
		\end{lemma}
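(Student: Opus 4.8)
The plan is to treat the two implications separately. The forward implication falls out immediately from the formal properties of the dual recorded just above, so the real work is in the reverse implication, which I would obtain from a splitting argument for $K$-linear maps.

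For the ``only if'' direction I would start from $\sigma = g\tau$ with $g \in \Aut_K(L)$, dualize using $(\sigma_2\sigma_1)^* = \sigma_1^*\sigma_2^*$ to get $\sigma^* = \tau^* g^*$, and then use that $g^* \in \Aut_K(L)$ is in particular surjective, so that $\sigma^*(L) = \tau^*\big(g^*(L)\big) = \tau^*(L)$.

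For the ``if'' direction, the first step is to rephrase the goal in terms of the duals: since $\sigma^{**} = \sigma$ and $(g\tau)^* = \tau^* g^*$, exhibiting $g \in \Aut_K(L)$ with $\sigma = g\tau$ is the same as exhibiting $h \in \Aut_K(L)$ with $\sigma^* = \tau^* h$, after which one takes $g := h^*$. Now write $W := \sigma^*(L) = \tau^*(L) \subseteq U$, so that $\sigma^*$ and $\tau^*$ are both surjections $L \twoheadrightarrow W$. Applying the rank--nullity theorem to each of them gives
\begin{equation*}
\dim_K \Ker \sigma^* \;=\; \dim_K L - \dim_K W \;=\; \dim_K \Ker \tau^*,
\end{equation*}
so the two kernels have equal dimension. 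I would then fix direct sum decompositions $L = \Ker \sigma^* \oplus S = \Ker \tau^* \oplus T$ and note that the restrictions $\sigma^*|_S : S \to W$ and $\tau^*|_T : T \to W$ are isomorphisms, with $\dim_K S = \dim_K T = \dim_K W$. Finally, define $h$ to be any $K$-linear isomorphism $\Ker \sigma^* \to \Ker \tau^*$ on the first summand and $(\tau^*|_T)^{-1}\circ(\sigma^*|_S)$ on $S$; this $h$ is invertible because it is assembled from isomorphisms across the two decompositions, and a direct check on each summand shows $\tau^* h = \sigma^*$. Taking $g := h^*$ then gives $\sigma = g\tau$, completing the proof.

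I do not expect a genuine obstacle here; the only point that deserves a little care is the equivalence ``$\sigma = g\tau \iff \sigma^* = \tau^* g^*$'', which rests on the involutivity and contravariance of $(-)^*$ established before the lemma, together with the bookkeeping of dimensions. The same argument can of course be phrased with matrices: writing $\sigma \leftrightarrow M$ and $\tau \leftrightarrow N$ in $\M_{k\times n}(K)$, the hypothesis $\sigma^*(L) = \tau^*(L)$ says precisely that $M$ and $N$ span the same column space in $U$, whence $M = NQ$ for some $Q \in \GL_n(K)$, and the automorphism $b \mapsto Qb$ of $L$ is the dual of the required $g$.
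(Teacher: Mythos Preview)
Your proof is correct and follows the same approach as the paper: both reduce the claim to the dual statement ``$\sigma^*(L)=\tau^*(L)$ iff $\sigma^*=\tau^* h$ for some $h\in\Aut_K(L)$'' and then pass back via $g=h^*$. The paper simply asserts this dual equivalence as evident linear algebra, whereas you spell it out with the rank--nullity and splitting argument (and the equivalent column-space/matrix reformulation), which is exactly the justification the paper leaves implicit.
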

		\begin{proof}
			The $K$-linear spaces $\sigma^*(L)$ and $\tau^*(L)$ are equal if and only if there exists a map $h \in \Aut_K(L)$ such that $\duale{\tau} h = \duale{\sigma}$, or the same, calculating the dual of both maps, there exists a map $g = \duale{h} \in \Aut_K(L)$ such that $g \tau = \sigma$.
		\end{proof}
		\begin{proposition}\label{extendibility-criterium}
			The $K$-linear map $f$ is extendible if and only if the tuples of spaces $\V$ and $\U$ are equivalent.
		\end{proposition}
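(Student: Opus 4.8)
The plan is to reduce both implications to \Cref{lemma-equal-column-spaces} applied one coordinate at a time; the only real content is translating between the monomial form of a map on $L^m$ and the coordinatewise equalities of column spaces $\dual{\mu_i}(L)=\dual{\lambda_{\pi(i)}}(L)$. I will use throughout that $V_i=\dual{\lambda_i}(L)$ and $U_i=\dual{\mu_i}(L)$, that $(\sigma_2\sigma_1)^*=\dual{\sigma_1}\dual{\sigma_2}$, and that $\Aut_K(L)$ is stable under $\sigma\mapsto\dual{\sigma}$, all recorded in the text above.

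For the ``only if'' direction, suppose $f$ is extendible, so there is a $K$-monomial map $F\colon L^m\to L^m$ with $F|_{\lambda(U)}=f$, given by a permutation $\pi\in S_m$ and automorphisms $g_1,\dots,g_m\in\Aut_K(L)$. Then $\mu=f\lambda=F\lambda$, and reading off the $i$th coordinate of $F$ gives $\mu_i=g_i\lambda_{\pi(i)}$ for every $i\in\myset{m}$. Dualizing yields $\dual{\mu_i}=\dual{\lambda_{\pi(i)}}\dual{g_i}$, and since $\dual{g_i}\in\Aut_K(L)$ is surjective we get $U_i=\dual{\mu_i}(L)=\dual{\lambda_{\pi(i)}}(\dual{g_i}(L))=\dual{\lambda_{\pi(i)}}(L)=V_{\pi(i)}$. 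Hence $V_j=U_{\pi^{-1}(j)}$ for all $j$, i.e.\ $\V\sim\U$.

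Conversely, assume $\V\sim\U$, so there is $\pi\in S_m$ with $V_i=U_{\pi(i)}$ for all $i$; equivalently $\dual{\mu_i}(L)=U_i=V_{\pi^{-1}(i)}=\dual{\lambda_{\pi^{-1}(i)}}(L)$ for each $i\in\myset{m}$. By \Cref{lemma-equal-column-spaces}, for each $i$ there is $g_i\in\Aut_K(L)$ with $\mu_i=g_i\lambda_{\pi^{-1}(i)}$. Define $F\colon L^m\to L^m$ by $F(w)=\big(g_1(w_{\pi^{-1}(1)}),\dots,g_m(w_{\pi^{-1}(m)})\big)$; this is $K$-monomial by \Cref{def-general-monomial}. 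For every $u\in U$, the $i$th coordinate of $F(\lambda(u))$ equals $g_i\big(\lambda_{\pi^{-1}(i)}(u)\big)=\mu_i(u)=\big(f\lambda(u)\big)_i$, so $F$ agrees with $f$ on $\lambda(U)$ and $f$ is extendible. I do not anticipate a genuine obstacle beyond keeping the direction of the permutation and of dualization straight; once the column spaces are identified with the images $\dual{\lambda_i}(L)$ and $\dual{\mu_i}(L)$, both implications follow immediately from \Cref{lemma-equal-column-spaces}.
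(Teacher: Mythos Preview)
Your proof is correct and follows essentially the same route as the paper: both reduce extendibility to the existence of $\pi$ and $g_i\in\Aut_K(L)$ with $\mu_i=g_i\lambda_{\pi(i)}$ and then use \Cref{lemma-equal-column-spaces} to pass to $U_i=V_{\pi(i)}$. The only difference is that you spell out the dualization and the explicit construction of the $K$-monomial map, whereas the paper invokes \Cref{lemma-equal-column-spaces} for both directions in one line.
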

		\begin{proof}
			The map $f$ is extendible if and only if there exist a permutation $\pi \in S_m$ and maps $g_1,\dots, g_m \in \Aut_K(L)$ such that $\mu_i = g_i\lambda_{\pi(i)}$, for all $i \in \myset{m}$.
			From \Cref{lemma-equal-column-spaces}, the last statement is equivalent to the existence of a permutation $\pi \in S_m$ such that $U_i = V_{\pi(i)}$.
		\end{proof}

	\section{Characters and their applications}
		The proof of the MacWilliams Extension Theorem firstly appeared in the works of MacWilliams and it was later refined by several authors. Namely, in \cite{wood}, Ward and Wood greatly simplified it, using a character theory approach. Generalized analogues of the MacWilliams Extension Theorem for the codes linear over rings and the related properties were discussed in \cite{greferath}, \cite{wood2} and \cite{wood1} where the authors also used the techniques of the character theory.
		
		Recall the notation and basic properties of characters (for more details see \cite[Ch.~18 \S2]{lang}, \cite[Ch.~5 \S4]{macwilliams} and \cite{wood}). For a finite abelian group $G$ let $\hat{G}$ be the set of all homomorphisms from $(G,+)$ to $(\mathbb{C}^*,\times)$, where $\mathbb{C}^*$ is the multiplicative group of complex numbers. With the defined sum of homomorphisms: for $g,h \in \ch{G}, x \in G$, $(g+h)(x) = g(x)h(x)$, the set $\ch{G}$ form an abelian group and is called a \textit{group of characters}.
		It is proved, that the groups $(G,+)$ and $(\ch{G},+)$ are isomorphic (see \cite{wood}).
		
		Let $G$ be a $K$-linear space of dimension $k$. Fix a $K$-linear basis in $G$ and consider the bilinear form $(-,-)_G: G \times G \rightarrow K$, for any $x,y \in G$, $(x,y)_G = \sum_{i=1}^k x_i y_i$. Let $\pi$ be a nontrivial character in $K$. Define a map $\psi_G: G \rightarrow \ch{G}$ as $\psi_G(x)(y) = \chi_x(y) = \pi((x,y)_G)$, where, $x,y \in G$. Define in $\ch{G}$ a multiplication by scalar $(\lambda g)(x) = g(\lambda x)$, where $x \in G$, $g \in \ch{G}$, $\lambda \in K$.
		It is easy to see that $\ch{G}$ is a vector space over $K$ and the map $\psi_G$ is an isomorphism of $K$-linear spaces. \comment{Indeed, $\psi_G$ preserves the addition, for any $x_1, x_2 ,y \in G$, $\psi_G(x_1 + x_2)(y) = \pi( (x_1 + x_2,y)_G ) = \pi((x_1,y)_G )\pi( (x_2,y)_G) = (\psi_G(x_1) + \psi_G(x_2))(y)$ and for any $x,y \in G, \lambda \in K$, $\psi(\lambda x)(y) = \pi((\lambda x, y)_G) = \pi((x, \lambda y)_G) = \psi(x)(\lambda y) = (\lambda\psi(x))(y)$.}
		
		The important property of characters is their linear independence as complex functions. If $\chi_1, \dots, \chi_k \in \ch{G}$ are different characters and $a_1, \dots, a_k \in \mathbb{C}$, then the equality, for all $x \in G$, $\sum_{i = 1}^k a_i \chi_i(x) = 0$, implies that all $a_i = 0$ (see \cite[p.~283]{lang}). 
		
		Also, it is a well known fact that the weight function can be rewritten as a sum of characters (see \cite[p.~143]{macwilliams}). For the weight function $\wt: G \rightarrow \{0,1\}$, that maps $0$ to $0$ and other elements to $1$, the following holds, for all $a \in G$, \begin{equation*}
		\frac{1}{|G|} \sum_{\chi \in \hat{G}} \chi(a) = \frac{1}{|G|} \sum_{b \in G} \chi_b(a) = 1 - \wt(a)\;.
		\end{equation*}
	
		Recall that $U$ and $L$ are vector spaces over $K$.
		Let $\sigma$ be an element in $\Hom_K(U,L)$. Define a map $\ch{\sigma}: \ch{L} \rightarrow \ch{U}$ as $\che{\sigma}(\chi) = \chi\sigma$, for all $\chi \in \ch{L}$. The map $\ch{\sigma}$ is a $K$-linear homomorphism. Indeed, for any $\chi_1, \chi_2 \in \ch{L}$, $ u \in U$, $(\che{\sigma}(\chi_1 + \chi_2))(u) = \chi_1(\sigma(u))\chi_2(\sigma(u)) = (\che{\sigma}(\chi_1) + \che{\sigma}(\chi_2))(u)$ and for any $\chi \in \ch{L}$, $\lambda \in K$, $u \in U$, $\che{\sigma}(\lambda\chi)(u) = \chi(\lambda\sigma(u)) = \chi(\sigma(\lambda u)) = (\lambda\che{\sigma}(\chi))(u)$.
		
		\begin{lemma}\label{lemma-diagrams-and-morphisms}
			For each $\sigma \in \Hom_K(U,L)$ the following diagram is commutative,
			\begin{equation*}
			\begin{CD}
			U @>\psi_U>> \ch{U}\\
			@AA\duale{\sigma}A @AA\che{\sigma}A\\
			L @>\psi_L>> \ch{L}
			\end{CD}
			\end{equation*}
		\end{lemma}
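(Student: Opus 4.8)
The plan is to verify commutativity of the square directly, by unwinding the definitions of the four maps involved and evaluating the two composites $\psi_U \circ \dual{\sigma}$ and $\che{\sigma} \circ \psi_L$, both of which are maps $L \to \ch{U}$, on an arbitrary element. So I would fix $b \in L$ and $u \in U$ and compare the two complex numbers $\big(\psi_U(\dual{\sigma}(b))\big)(u)$ and $\big(\che{\sigma}(\psi_L(b))\big)(u)$.

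The key step is the interaction between the two coordinate bilinear forms $(-,-)_U$ and $(-,-)_L$ and the matrix $M \in \M_{k \times n}(K)$ representing $\sigma$, through the relations $\sigma(u) = M^T u$ and $\dual{\sigma}(b) = M b$. These give the single scalar identity
\begin{equation*}
	(\dual{\sigma}(b), u)_U = (M b)^T u = b^T M^T u = (b, M^T u)_L = (b, \sigma(u))_L \;,
\end{equation*}
that is, $\dual{\sigma}$ is the adjoint of $\sigma$ with respect to the chosen forms. Applying the fixed nontrivial character $\pi$ of $K$ to both ends, and using $\psi_G(x)(y) = \pi((x,y)_G)$ together with $\che{\sigma}(\chi) = \chi\sigma$, I obtain
\begin{equation*}
	\big(\psi_U(\dual{\sigma}(b))\big)(u) = \pi\big((\dual{\sigma}(b), u)_U\big) = \pi\big((b, \sigma(u))_L\big) = \big(\psi_L(b)\big)(\sigma(u)) = \big(\che{\sigma}(\psi_L(b))\big)(u)\;.
\end{equation*}
Since $b$ and $u$ were arbitrary, $\psi_U \circ \dual{\sigma} = \che{\sigma} \circ \psi_L$, which is exactly the claimed commutativity of the diagram.

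I do not anticipate a genuine obstacle here: the statement is really just the assertion that the transpose description of $\dual{\sigma}$ is the adjoint with respect to the fixed bilinear forms, transported across the character pairings $\psi_U$ and $\psi_L$. The only point requiring a little care is bookkeeping — the forms live on $U$ and $L$, which have different dimensions $k$ and $n$, so one must make sure $M$ is applied on one side and $M^T$ on the other; once the displayed scalar identity is in place, the rest of the argument is purely formal. One could equally phrase the conclusion basis-freely: $\psi_U$ and $\psi_L$ are $K$-linear isomorphisms under which $\che{\sigma}$ corresponds to $\dual{\sigma}$ by the very definition of the dual map, but the computation above makes this explicit without further machinery.
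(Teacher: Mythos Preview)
Your proof is correct and is essentially identical to the paper's own argument: both fix $b \in L$ and $u \in U$, use the matrix description $\sigma(u) = M^T u$, $\dual{\sigma}(b) = Mb$, and verify the scalar identity $(Mb,u)_U = b^T M^T u = (b, M^T u)_L$ before applying $\pi$. The only difference is cosmetic ordering of the chain of equalities.
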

		\begin{proof}
			
\comment{			
			Prove that the map $\sigma \mapsto \che{\sigma}$ is $K$-linear. For any $\sigma_1,\sigma_2 \in \Hom_K(U,L), \chi \in \ch{L}, u \in U$, $\widehat{\sigma_1 + \sigma_2}(\chi) = \chi(\sigma_1 + \sigma_2) = \chi\sigma_1 + \chi\sigma_2 = (\ch{\sigma_1} + \ch{\sigma_2})(\chi)$ and for any $\sigma \in \Hom_K(U,L), \lambda \in K$, $\widehat{\lambda \sigma}(\chi) = \chi(\lambda\sigma) = \chi(\sigma\lambda) = \lambda \ch{\sigma}(\chi)$.
			
			Since the cardinalities of both spaces $\Hom_K(U,L)$ and $\Hom_K(\ch{L}, \ch{U})$ are equal and the map $\sigma \mapsto \che{\sigma}$ is $K$-linear, it is enough to show that $\sigma \mapsto \che{\sigma}$ is injective. Assume that for $\sigma_1, \sigma_2 \in \Hom_K(U,L)$ for all $\chi \in \ch{L}$, $\chi\sigma_1 = \chi\sigma_2$. Equivalently, since characters are homomorphisms, for all $u \in U, \chi \in \ch{L}$ we get $ \chi((\sigma_1 - \sigma_2)(u)) = 1$. For all $u \in U$, $\wt((\sigma_1 - \sigma_2)(u)) =  1 - \frac{1}{\card{L}}\sum_{\chi \in \ch{L}} \chi((\sigma_1 - \sigma_2)(u)) =  0$ that implies $\sigma_1 = \sigma_2$.
			
			We now prove the commutativity of the diagram. }For $b \in L$ calculate $\che{\sigma}(\psi_L(b)) = \chi_b \sigma$ and $\psi_U(\sigma^*(b)) = \chi_{\sigma^*(b)}$. Let matrix $M \in \M_{k \times n}(K)$ be such that $\sigma(u) = M^T u$ and $\sigma^*(b) = M b$ for $u \in U$, $b \in L$. For all $u \in U$, $\chi_b(\sigma(u)) = \chi_b(M^Tu) = \pi((b,M^T u)_L) = \pi(b^T M^T u) = \pi((Mb,u)_U) = \chi_{\sigma^*(b)}(u)$.
		\end{proof}

		Let $X$ be a set and let $Y$ be a subset of $X$. An indicator function is a map $\id_Y: X \rightarrow \{0,1\}$, such that $\id_Y(x) = 1$ if $x \in Y$ and $\id_Y(x) = 0$ otherwise. Recall that for a map $\lambda \in \Hom_K(U,L^m)$ by $\lambda_i$ we denote the projection of $\lambda$ on the $i$th coordinate and $V_i = \lambda_i^*(L)$, where $i \in \myset{m}$. 
		
		\begin{proposition}\label{thm-weight-representation}
			Let $\lambda \in \Hom_K(U,L^m)$. For any $u \in U$ the following equality holds,
			\begin{equation*}
				\wt(\lambda(u)) = m - \sum_{v \in U} \left(\sum_{i=1}^m \frac{1}{\card{V_i}} \id_{V_i}(v)\right) \chi_v(u) \;.
			\end{equation*}
		\end{proposition}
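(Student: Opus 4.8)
The plan is to reduce everything to the scalar identity $\frac{1}{|L|}\sum_{\chi\in\ch L}\chi(a)=1-\wt(a)$ for $a\in L$, applied coordinate by coordinate. First I would write $\wt(\lambda(u))=\sum_{i=1}^m\wt(\lambda_i(u))$, where on the right $\wt\colon L\to\{0,1\}$ is the indicator of being nonzero. Applying the quoted character-sum formula on $L$ to each $a=\lambda_i(u)$ gives
\begin{equation*}
\wt(\lambda(u))=m-\sum_{i=1}^m\frac{1}{\card L}\sum_{\chi\in\ch L}\chi\big(\lambda_i(u)\big)\;,
\end{equation*}
so the whole task is to rewrite the inner double sum in terms of the column spaces $V_i$.

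Next I would handle a single term $\sum_{\chi\in\ch L}\chi(\lambda_i(u))$. By definition $\chi(\lambda_i(u))=\big(\che{\lambda_i}(\chi)\big)(u)$. Since $\psi_L\colon L\to\ch L$ is an isomorphism, I can index the characters of $L$ as $\chi=\psi_L(b)=\chi_b$ with $b$ ranging over $L$, and then \Cref{lemma-diagrams-and-morphisms} applied to $\sigma=\lambda_i$ gives $\che{\lambda_i}(\chi_b)=\psi_U(\dual{\lambda_i}(b))=\chi_{\dual{\lambda_i}(b)}$. Hence
\begin{equation*}
\sum_{\chi\in\ch L}\chi\big(\lambda_i(u)\big)=\sum_{b\in L}\chi_{\dual{\lambda_i}(b)}(u)\;.
\end{equation*}
Now $\dual{\lambda_i}\colon L\to U$ is $K$-linear with image $\dual{\lambda_i}(L)=V_i$, so its fibres all have cardinality $\card{\Ker \dual{\lambda_i}}=\card L/\card{V_i}$; grouping the sum over $b$ according to its image $v=\dual{\lambda_i}(b)\in V_i$ yields $\sum_{b\in L}\chi_{\dual{\lambda_i}(b)}(u)=\frac{\card L}{\card{V_i}}\sum_{v\in V_i}\chi_v(u)$. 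Dividing by $\card L$ cancels the $\card L$ factor and leaves $\frac{1}{\card{V_i}}\sum_{v\in V_i}\chi_v(u)=\sum_{v\in U}\frac{1}{\card{V_i}}\id_{V_i}(v)\,\chi_v(u)$.

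Finally I would substitute this back, summing over $i\in\myset m$ and interchanging the two finite sums, which gives exactly
\begin{equation*}
\wt(\lambda(u))=m-\sum_{v\in U}\left(\sum_{i=1}^m\frac{1}{\card{V_i}}\id_{V_i}(v)\right)\chi_v(u)\;.
\end{equation*}
The only step that requires any care is the fibre-counting reindexing: one must check that passing from the sum over $b\in L$ to the sum over $v\in V_i$ introduces precisely the multiplicity $\card L/\card{V_i}$, i.e. that $\dual{\lambda_i}$ is a surjection onto $V_i$ with uniformly sized fibres. Everything else is routine bookkeeping with the isomorphism $\psi_L$ and the commuting square of \Cref{lemma-diagrams-and-morphisms}, together with the identity $V_i=\dual{\lambda_i}(L)$ already recorded for the tuple of spaces of $A$.
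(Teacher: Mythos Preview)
Your proof is correct and follows essentially the same approach as the paper: both start from the scalar identity $1-\wt(a)=\frac{1}{|L|}\sum_{\chi\in\ch L}\chi(a)$, use the commuting square of \Cref{lemma-diagrams-and-morphisms}, and perform a fibre-counting reindexing. The only cosmetic difference is the order of operations: the paper first reindexes the sum over $\ch L$ as a sum over $\ch U$ using the fibres of $\che{\lambda_i}$ and only at the end pulls back along $\psi_U$ to obtain $V_i=\psi_U^{-1}\che{\lambda_i}(\ch L)$, whereas you invoke the lemma immediately to pass from $\chi_b$ to $\chi_{\dual{\lambda_i}(b)}$ and then count fibres of $\dual{\lambda_i}$ directly in $U$. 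The two computations are the same under the isomorphisms $\psi_L,\psi_U$.
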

		\begin{proof}
			For any $u \in U$,
			\begin{equation*}
			\begin{split}
			m - \wt\big(\lambda(u)\big) = \sum_{i=1}^m(1 - \wt(\lambda_i(u))) = \sum_{i = 1}^{m} \frac{1}{|L|} \sum_{\chi \in \hat{L}} \chi\big(\lambda_i(u)\big)\\
			=\frac{1}{\card{L}}\sum_{i=1}^m \sum_{\chi \in \ch{L}} \che{\lambda_i}(\chi)(u) =
			\frac{1}{\card{L}}\sum_{i=1}^m\sum_{\pi \in \ch{U}} \left(\card{\Ker \che{\lambda}_i}\id_{\Image{ \che{\lambda_i}}{\ch{L}}}(\pi)\right) \pi(u)
			\\=\sum_{\pi \in \ch{U}} \left(\sum_{i=1}^m \frac{1}{\card{\Image{\che{\lambda_i}}{\ch{L}}}} \id_{\Image{\che{\lambda_i}}{\ch{L}}}(\pi)\right) \pi(u) \;.
			\end{split}
			\end{equation*}
			Substitute $\pi \in \ch{U}$ by $\psi_U(v)$, for $v \in U$. Consider the fact that $\psi_U(v) = \chi_v \in \ch{U}$. \Cref{lemma-diagrams-and-morphisms} implies
			${\psi_U^{-1}\che{\lambda_i}(\ch{L})}=
			\lambda_i^* \psi_L^{-1}(\ch{L}) = \duale{\lambda_i}(L) = V_i$, hence	
			$\id_{\che{\lambda_i}(\ch{L})} \psi_U = \id_{\psi_U^{-1}\che{\lambda_i}(\ch{L})}= \id_{V_i}$ and $\card{\che{\lambda_i}(\ch{L})} = \card{V_i}$, for $i \in \myset{m}$.
		\end{proof}

\comment{		
		To give more complete picture, we present here a sketch of the proof of the MacWilliams Extension Theorem, based on the character-theoretic techniques (see a full proof in \cite{wood}).
		\begin{proof}[Sketch proof of the MacWilliams Extension Theorem]\label{proof-mac-willams-1}
			Since the map $f$ is a linear isometry, the equality $\wt(\lambda(u)) = \wt(\mu(u))$ holds for all $u \in U$. From \cref{equation-char-sum}, this is equivalent to 
			$\sum_{i=1}^m \sum_{b \in L} \lambda_i\chi_b = \sum_{i=1}^m \sum_{b \in L} \mu_i\chi_b$. For every $b \in F, i \in \myset{m}$ the elements $\lambda_i\chi_b$ and $\mu_i\chi_b$ are characters in $\ch{U}$. Eliminating from both sides trivial characters (those with $b = 0$), we get
			\begin{equation*}
				\sum_{i=1}^m \sum_{b \in L\setminus\{0\}} \lambda_i\chi_b = \sum_{i=1}^m \sum_{b \in L\setminus\{0\}} \mu_i\chi_b \;.
			\end{equation*}
			Since characters in $\ch{U}$ are linearly independent, there exist $i,j \in \myset{m}$ and $a,b \in L\setminus\{0\}$ such that $\lambda_i\chi_b =\mu_j \chi_a$. The previously defined isomorphism $\psi_U: U \rightarrow \ch{U},\psi(x) = \chi_x$ in our case has the property: $\chi_x(y) = \chi_y(x) = \chi_{xy}(1)$, for $x,y \in L$. Consequently, considering the $L$-linear space structure on $\ch{L}$, for all $u \in U$ and for all $c \in L$, $c\chi_{b}(\lambda_i(u)) = c\chi_{a}(\mu_j(u))$ if and only if $\chi_{b\lambda_i(u)}(c) = \chi_{a\mu_j(u)}(c)$. The last equality is the equality of characters in $\ch{U}$ and thus for all $u \in U$, $b\lambda_i(u) = a\mu_j(u)$.
			Also, for all $c \in L$, $cb\lambda_i = ca\mu_j$ that implies $\lambda_i\chi_{cb} = \mu_j\chi_{ca}$. Since for fixed $b,a \in L \setminus\{0\}$, the products $cb, ca$ for $c \in L$ run through $L$, all $i$th and $j$th terms could be eliminated from the equation.
			Repeating the procedure (with $m$ reduced by one) several times, we get the statement of the theorem.
		\end{proof}
}

	\section{The main theorem}
		Let $K \subseteq L$ be a pair of finite fields. We use the representation of the weight function presented in \Cref{thm-weight-representation} to get a description of $K$-linear isometries of $K$-linear codes in $L^m$.
				
		Let $C$ be a $K$-linear code in $L^m$ with some fixed $K$-linear basis. The matrix $A \in \M_{k \times m}(L)$, with the rows equal to the basis vectors of $C$, is called a \textit{generator matrix} of $C$. Let $\V= (V_1,\dots, V_m)$ the tuple of spaces of $A$. 
		Call $\V$ a \emph{tuple of spaces} of $C$. Since a generator matrix of a code is not unique, a tuple of spaces of a code is also not unique.
		
		\begin{proposition}\label{thm-dimension-of-sum}
			Let $C$ be a $K$-linear code and $(V_1, \dots, V_m)$ be a tuple of spaces of $C$. The equality $\dim_K C = \dim_K \left(\sum_{i = 1}^m V_i\right)$ holds.
		\end{proposition}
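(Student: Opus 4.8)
The plan is to reduce both sides of the equality to the rank over $K$ of a single matrix. Fix the generator matrix $A \in \M_{k \times m}(L)$ of $C$ and the $K$-basis $b_1,\dots,b_n$ of $L$, and recall that $A$ induces $\lambda \in \Hom_K(U,L^m)$ with $\lambda(u)=A^T u$, projections $\lambda_1,\dots,\lambda_m \in \Hom_K(U,L)$, and $V_i=\duale{\lambda_i}(L)$ for $i \in \myset{m}$, so that $\lambda(U)=C$ (the $K$-span of the rows of $A$). For each $i$ let $M^{(i)} \in \M_{k \times n}(K)$ be the matrix of $\lambda_i$ over $K$, i.e.\ $\lambda_i(u)=(M^{(i)})^T u$; by the definition of the column space its $l$-th column is the $b_l$-coordinate vector of the $i$-th column of $A$, and $V_i=\duale{\lambda_i}(L)=M^{(i)}K^n$ is the $K$-column space of $M^{(i)}$. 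Form $B=(M^{(1)}\mid M^{(2)}\mid\dots\mid M^{(m)}) \in \M_{k\times mn}(K)$.

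First I would read off the column space of $B$: the column space of a concatenation is the sum of the column spaces, so $B\,K^{mn}=\sum_{i=1}^m M^{(i)}K^n=\sum_{i=1}^m V_i$, whence $\dim_K\!\big(\sum_{i=1}^m V_i\big)=\operatorname{rank}_K B$. Then I would read off the row space of $B$: by construction of the $M^{(i)}$, the $j$-th row of $B$ is exactly the image of the $j$-th row of $A$ under the $K$-linear isomorphism $\phi\colon L^m\to K^{mn}$ that expands each coordinate in the basis $b_1,\dots,b_n$. Hence $\phi$ carries $C=\langle\text{rows of }A\rangle_K$ isomorphically onto the row space of $B$, so $\dim_K C=\operatorname{rank}_K B$ as well. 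Since column rank equals row rank, $\dim_K C=\dim_K\big(\sum_{i=1}^m V_i\big)$.

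This argument is essentially bookkeeping, and the one spot needing care is verifying that the $j$-th row of $B$ is indeed the $\phi$-image of the $j$-th row of $A$: this amounts to checking that forming $\lambda$ from $A$, then the projections $\lambda_i$, then their $K$-matrices $M^{(i)}$, is compatible with the coordinatewise isomorphism $\phi$, which follows at once from $\lambda(u)=A^T u$ and the definition of the column space — the only pitfall being to keep the $L$-structure and the $K$-structure on $L^m$ apart. (If one prefers to stay inside the character-theoretic framework, the same conclusion follows by applying $\psi_U$ and \Cref{lemma-diagrams-and-morphisms}: one identifies $\psi_U\big(\sum_{i=1}^m V_i\big)$ with the image of $\widehat\lambda\colon\widehat{L^m}\to\widehat U$ after the splitting $\widehat{L^m}\cong\prod_i\widehat L$, and then $\dim_K\operatorname{im}\widehat\lambda=\dim_K L^m-\dim_K\Ker\widehat\lambda=\dim_K L^m-\dim_K(L^m/C)=\dim_K C$; but the rank argument is shorter and uses nothing beyond linear algebra.)
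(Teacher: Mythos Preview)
Your argument is correct and is essentially the same as the paper's: both expand the columns of the generator matrix $A$ in the $K$-basis $b_1,\dots,b_n$ of $L$ to obtain a $k\times mn$ matrix $B$ over $K$ (your blocks $M^{(i)}$ are precisely the paper's columns $v_{ij}$), and then identify the column rank of $B$ with $\dim_K\sum_i V_i$ and the row rank with $\dim_K C$. The only differences are cosmetic---you phrase things via $\lambda_i$, $M^{(i)}$ and the isomorphism $\phi$, and add an unnecessary character-theoretic aside---while the paper states the same rank comparison more tersely.
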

		\begin{proof}
			Let $A \in \M_{k \times m}(L)$ be a matrix that correspond to the tuple of spaces $(V_1, \dots, V_m)$, i.e. $V_i$ is a column space of the $i$th column $v_i \in L^k$ of $A$, for all $i \in \myset{m}$. Fix a $K$-linear basis $b_1,\dots, b_n$ of $L$ over $K$ and denote by $v_{ij} \in U$ the $j$th term of the expansion of $v_i$ in the basis, for all $i \in \myset{m}, j \in \myset{n}$. Denote $B \in \M_{k \times nm}(K)$ the matrix formed by $nm$ columns $v_{ij}$, $i \in \myset{m}, j \in \myset{n}$.
			The row rank of $B$ equals to the row rank of $A$ and is equal to $\dim_K C$. From the other side, the column rank of $B$ equals to the dimension of the column space of matrix $B$ and is equal to $\dim_K \sum_{i = 1}^m V_i$.
		\end{proof}
		
		Let $f: C \rightarrow L^m$ be a $K$-linear map. Let $U$ be a vector space over $K$ with the dimension equal to $\dim_K C$. Denote by $\lambda$ the map in $\Hom_K(U,L^m)$ defined as $\lambda(u) = A^T u$, for $u \in U$. Since $\lambda(U) = C$, we can define a map $\mu = f\lambda \in \Hom_K(U,L^m)$.
		Let $A' \in \M_{k \times m}(L)$ be such that $\mu(u) = A'^T u$, for all $u \in U$. Denote by $\U = (U_1,\dots,U_m)$ and $\V = (V_1,\dots, V_m)$ tuples of spaces of matrices $A'$ and $A$ correspondingly.

		\begin{proposition}\label{isometry-main-criterium}
			Let $C$ be a $K$-linear code in $L^m$ and $f: C \rightarrow L^m$ be a $K$-linear map. The map $f$ is an isometry if and only if
			\begin{equation}\label{eq-main-counting-space}
				\sum_{i=1}^m \frac{1}{\card{V_i}} \id_{V_i}=
				\sum_{i=1}^m \frac{1}{\card{U_i}} \id_{U_i}\;.
			\end{equation}
		\end{proposition}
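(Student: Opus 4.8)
The plan is to compute both Hamming weights via \Cref{thm-weight-representation} and then match coefficients using linear independence of characters. First I would record the elementary reformulation of what needs to be proved: since $f$ is $K$-linear, it is additive, so $f$ is an isometry of $C$ if and only if it preserves the Hamming weight; and since $U$ has dimension $\dim_K C$ and $\lambda(U)=C$, the map $\lambda$ restricts to a bijection $U\to C$, so ``$f$ preserves weight on $C$'' is equivalent to the pointwise identity $\wt(\mu(u))=\wt(\lambda(u))$ for every $u\in U$, where $\mu=f\lambda$ as in the set-up preceding the proposition.

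Next I would apply \Cref{thm-weight-representation} twice, once to $\lambda$ (whose $i$th coordinate has $\lambda_i^*(L)=V_i$) and once to $\mu$ (whose $i$th coordinate has $\mu_i^*(L)=U_i$), obtaining
\begin{equation*}
\wt(\lambda(u)) = m - \sum_{v\in U}\Big(\sum_{i=1}^m \frac{1}{\card{V_i}}\id_{V_i}(v)\Big)\chi_v(u),\qquad
\wt(\mu(u)) = m - \sum_{v\in U}\Big(\sum_{i=1}^m \frac{1}{\card{U_i}}\id_{U_i}(v)\Big)\chi_v(u).
\end{equation*}
Subtracting, the condition $\wt(\mu(u))=\wt(\lambda(u))$ for all $u\in U$ is exactly
\begin{equation*}
\sum_{v\in U}\Big(\sum_{i=1}^m \frac{1}{\card{V_i}}\id_{V_i}(v) - \sum_{i=1}^m \frac{1}{\card{U_i}}\id_{U_i}(v)\Big)\chi_v(u) = 0 \qquad \text{for all } u\in U.
\end{equation*}

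The key step is then to invoke the linear independence of characters: the functions $\chi_v=\psi_U(v)$, as $v$ ranges over all of $U$, are pairwise distinct (because $\psi_U$ is an isomorphism of $K$-linear spaces, hence injective), so they are linearly independent as $\mathbb{C}$-valued functions on $U$. Consequently the displayed vanishing forces every coefficient to be zero, that is, $\sum_{i=1}^m \frac{1}{\card{V_i}}\id_{V_i}(v) = \sum_{i=1}^m \frac{1}{\card{U_i}}\id_{U_i}(v)$ for each $v\in U$, which is precisely \eqref{eq-main-counting-space} read as an equality of functions on $U$. The converse direction is obtained by reversing the chain of equivalences: if \eqref{eq-main-counting-space} holds, the two weight formulas coincide term by term, so $\wt(\mu(u))=\wt(\lambda(u))$ for all $u$ and $f$ is an isometry. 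I do not anticipate a genuine obstacle here; the only points needing care are the bijectivity of $\lambda\colon U\to C$ and the remark that the character sum in \Cref{thm-weight-representation} is indexed by all of $U$, so that the full linear independence of $\ch{U}$ applies and no cancellation between distinct coefficients can occur.
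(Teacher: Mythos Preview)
Your proposal is correct and follows essentially the same approach as the paper: apply \Cref{thm-weight-representation} to both $\lambda$ and $\mu$, equate the resulting character expansions, and use linear independence of the characters $\chi_v$ to match coefficients. The only differences are expository: you spell out why $\lambda\colon U\to C$ is a bijection and why the $\chi_v$ are pairwise distinct, whereas the paper leaves these implicit.
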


		\begin{proof}	
			By definition, a map $f$ is an isometry if for all $x \in C$, $\wt(x) = \wt(f(x))$, or the same for a $K$-linear map $f$, for all $u \in U$, $\wt(\lambda(u)) = \wt(\mu(u))$. Consequently, using \Cref{thm-weight-representation}, $f$ is an isometry if and only if the following equality of functions holds,
			\begin{equation*}
				\sum_{v \in U} \left(\sum_{i=1}^m \frac{1}{\card{V_i}} \id_{V_i}(v)\right) \chi_v = \sum_{v \in U} \left(\sum_{i=1}^m \frac{1}{\card{U_i}} \id_{U_i}(v)\right) \chi_v\;.
			\end{equation*}
			Since different characters in $\hat{U}$ are linearly independent, the coefficients in the equation are equal for each $v \in U$.
		\end{proof}

		\Cref{isometry-main-criterium} shows that the task of description of $K$-linear isometries can be reformulated in terms of solutions of \cref{eq-main-counting-space},
		where $U_1, \dots, U_m$, $V_1, \dots, V_m$ are spaces in $U$, and dimensions of all spaces are bounded by $n$. 
		We call the couple of tuples of spaces $(\U,\V)$ the \emph{solution}, if $\U$ and $\V$ satisfy \cref{eq-main-counting-space}.
\comment{
	We call two couples of tuples $(\U,\V)$ and $(\U',\V')$ equivalent if $\U \sim \U'$, $\V \sim \V'$ or $\U \sim \V'$, $\V \sim \U'$. Obviously, if $(\U,\V)$ is a solution, then any equivalent couples of tuples is also a solution.
}
		
		Evidently, if $\U \sim \V$, then $(\U,\V)$ is a solution. Call a solution $(\U,\V)$ \emph{trivial} if $\U \sim \V$ and \emph{nontrivial} otherwise.
		To illustrate \Cref{extendibility-criterium}, \Cref{isometry-main-criterium} and give an example of a nontrivial solution, we consider the following example observed in \cite{exaut}.
		\begin{example}\label{ex-3}
			Let the field $\mathbb{F}_4 = \{0,1,\o,\o^2\}$ be generated by $\o^2 = \o + 1$.
			Define an $\mathbb{F}_2$-linear map $f: C \rightarrow \mathbb{F}_4^3$ on the generators in the following way: $f\big((1,1,0)\big) = (1,1,0)$, $f\big((\o, \o, 0)\big) =(1,0,1)$ and $f\big((1,0,1)\big) =(\o, \o, 0)$. 
			Consider the following generator matrix $A$ of $C$ and the corresponding generator matrix $A'$ of $f(C)$,
			\begin{equation*}
			A = \left(\begin{matrix}
			1 & 1& 0\\
			\o & \o & 0 \\
			1 & 0 & 1
			\end{matrix}\right) \xrightarrow{f}
			\left(\begin{matrix}
			1 & 1& 0\\
			1 & 0 & 1\\
			\o & \o & 0
			\end{matrix}\right) = A' \;.
			\end{equation*}
			Calculate the tuples of spaces $V_1, V_2, V_3 \subseteq \mathbb{F}_2^3$ and $U_1, U_2, U_3 \subseteq \mathbb{F}_2^3$. The spaces are: $V_1 = \langle (1,0,1) , (0,1,0) \rangle_{\mathbb{F}_2}$, $V_2 = \langle (1,0,0) , (0,1,0) \rangle_{\mathbb{F}_2}$ and $V_3 = \langle (0,0,1) \rangle_{\mathbb{F}_2}$. In the same way, $U_1 = \langle (1,1,0) , (0,0,1) \rangle_{\mathbb{F}_2}$, $U_2 = \langle (1,0,0) , (0,0,1) \rangle_{\mathbb{F}_2}$ and $U_3 = \langle (0,1,0) \rangle_{\mathbb{F}_2}$. 
			The defined spaces $V_1, V_2, V_3$ and $U_1, U_2, U_3$ satisfy the equation,
			\begin{equation*}
			\id_{V_1} + \id_{V_2} + 2 \id_{V_3} = \id_{U_1} +  \id_{U_2} + 2 \id_{U_3}\;,
			\end{equation*}
			and therefore satisfy \cref{eq-main-counting-space}.
			 By \Cref{isometry-main-criterium}, the map $f: C \rightarrow \mathbb{F}_4^3$ is an $\mathbb{F}_2$-linear isometry. Moreover, by \Cref{extendibility-criterium}, since the tuples $(V_1, V_2, V_3)$ and $(U_1, U_2, U_3)$ are not equivalent, the isometry $f$ is unextendible.
		\end{example}
		
		Combining \Cref{extendibility-criterium} and \Cref{isometry-main-criterium}, we claim that a $K$-linear isometry is extendible if and only if the corresponding solution of \cref{eq-main-counting-space} is trivial. Nontrivial solutions of the equation must satisfy specific requirements on the subspace coverings. Such coverings and related questions are discussed in \cite{covnum} and are partially connected with our results. 
		
		\begin{lemma}\label{minimum-covering-number-of-one-space}
			Let $V$ be a nonzero vector space over $K$ and let $U_i \subset V$ be proper subspaces, for $i \in \myset{m}$. If $V = \bigcup_{i =1}^m U_i$, then $m$ is greater than the cardinality of ${K}$.
		\end{lemma}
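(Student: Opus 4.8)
The plan is to count elements. Suppose $V = \bigcup_{i=1}^m U_i$ with each $U_i$ a proper subspace of $V$. Since $V \neq 0$, we may pick a one-dimensional line in $V$; more usefully, since each $U_i$ is proper, there is a hyperplane $H_i \subseteq V$ (a subspace of codimension $1$) with $U_i \subseteq H_i$, so it suffices to prove the statement under the assumption that each $U_i$ is a hyperplane, i.e.\ $U_i = \ker \phi_i$ for some nonzero $\phi_i \in \Hom_K(V,K)$. The condition $V = \bigcup_{i=1}^m U_i$ then says: for every $v \in V$ there is some $i$ with $\phi_i(v) = 0$. Equivalently, the product $\prod_{i=1}^m \phi_i$, regarded as a function $V \to K$, vanishes identically on $V$.

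Now I would restrict to a generic line. Fix any $v_0 \in V$ with $\phi_1(v_0) \neq 0$ (possible since $\phi_1 \neq 0$). For $t \in K$, consider whether some fixed nonzero vector $w$ can be chosen so that the single-variable polynomials $t \mapsto \phi_i(v_0 + t w)$ are not all identically zero; more directly, the cleanest route is: the restriction of each $\phi_i$ to a suitable two-dimensional (or one-dimensional) subspace gives nonzero linear forms, and a nonzero polynomial of degree $m$ over the field $K$ has at most $m$ roots. Concretely, pick a plane $P \subseteq V$ (or a line) on which all the $\phi_i$ that are not identically zero on $P$ remain nonzero — since each $\phi_i$ is nonzero on $V$, for each $i$ the set of lines through $0$ contained in $\ker\phi_i$ is a proper subset, and a simple counting argument over the finite field $K$ shows we can choose a line $\ell = \langle w\rangle_K$ on which \emph{all} $\phi_i$ are nonzero provided $|K|$ is large enough relative to $m$ — but this is circular. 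Instead I would argue: on any line $\ell$, the forms $\phi_i|_\ell$ are either identically zero or vanish at the single point $0 \in \ell$; if some $\phi_i$ vanishes on all of $\ell$ then $\ell \subseteq U_i$. So choose $\ell$ not contained in any $U_i$ (such a line exists because finitely many proper subspaces cannot cover $V$ — but again that is the statement).

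Let me restructure to avoid circularity: I would take $v \in V$ with $v \notin U_1$, and $v' \in U_1 \setminus \{0\}$ (nonzero since, if $U_1 = 0$, then $V$ is a union of the proper subspaces $U_2,\dots,U_m$ which we handle by induction on $\dim V$, the base case $\dim V = 1$ being vacuous as a nonzero space has no nonzero proper subspace — wait, then $m$ copies of $\{0\}$ cannot cover a nonzero $V$, so that case is fine). Consider the line $\{v + t v' : t \in K\} \cup \{v'\}$, i.e.\ the affine line through $v$ in direction $v'$. For each $i$, the map $t \mapsto \phi_i(v + t v')$ is an affine-linear function of $t \in K$: it is constant (if $\phi_i(v')=0$) or bijective onto $K$ (if $\phi_i(v')\neq 0$). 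If it is constant and zero, then $\phi_1(v) = \phi_1(v + 0\cdot v')$; but for $i = 1$, $\phi_1(v') = 0$ and $\phi_1(v) \neq 0$, so $\phi_1(v + tv') = \phi_1(v) \neq 0$ for all $t$, meaning the point $v + tv'$ is never in $U_1$. For each $i \geq 2$, either $\phi_i(v')=0$ and $\phi_i(v+tv')$ is a constant $c_i$, which must be nonzero (else we could discard this line — handled below), or $\phi_i(v') \neq 0$ and $\phi_i(v + tv') = 0$ for exactly one value $t_i \in K$. In the latter case at most one point of the line of size $|K|$ lies in $U_i$. If additionally $v + tv' \notin U_i$ for all $t$ whenever $\phi_i(v')=0$ (i.e.\ the constant $c_i \neq 0$), then the $|K|$ points $v + t v'$, $t \in K$, are covered only by those $U_i$ ($i \geq 2$) with $\phi_i(v')\neq 0$, each contributing at most one point, forcing $|K| \leq m - 1 < m$, done. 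The remaining case is that some $\phi_i(v') = 0$ with $c_i = \phi_i(v) = 0$, i.e.\ $v \in U_i$; then restart the argument replacing $U_1$ by $U_i$ and $v$ by a vector outside $U_i$ — but a uniform way to finish is simply: among all lines $v + \langle v'\rangle$ with $v' \in U_1\setminus\{0\}$ and $v \notin U_1$, pick one, and observe that if no choice works then $V$ is covered by $U_2,\dots,U_m$ together with $U_1$ in a way that lets us induct on $\dim V$; I expect the cleanest writeup restricts everything to a $2$-dimensional subspace $W$ of $V$ not contained in $U_1$ and applies the polynomial-root bound there, so let me state the final plan that way.

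Final plan: reduce each $U_i$ to a containing hyperplane $\ker\phi_i$; pass to a $2$-dimensional subspace $W \subseteq V$ chosen so that $W \not\subseteq U_i$ for every $i$ (if no such $W$ exists, every $2$-plane lies in some $U_i$, which quickly forces $V$ itself into one $U_i$, contradiction — this sub-argument, handled by a short induction or a direct check, is the one delicate point); on $W \cong K^2$ each $\phi_i|_W$ is a nonzero linear form, hence $\prod_i \phi_i|_W$ is a nonzero homogeneous polynomial of degree $m$ on $K^2$; this product vanishes on all of $W$ by hypothesis, but a nonzero homogeneous degree-$m$ polynomial in two variables over $K$ has at most $m$ projective zeros, i.e.\ vanishes on at most $m$ lines through the origin, while $W$ has $|K| + 1$ such lines; therefore $|K| + 1 \leq m$, in particular $m > |K| = \card{K}$, as claimed. \textbf{Main obstacle:} arranging the existence of a $2$-plane $W$ not contained in any $U_i$ without invoking the very theorem being proved — the way around it is that a nonzero vector space always has a $2$-plane unless it is $1$-dimensional (a vacuous case, since then there are no nonzero proper subspaces to cover it), and if \emph{every} $2$-plane through a fixed nonzero $v_0 \notin U_1$ lay inside some $U_i$, one can run a short descending induction on $\dim V$ to reach a contradiction; alternatively, and most cleanly, one restricts directly to a generic line rather than a plane and uses that each $\phi_i$ restricted to a line avoiding $U_i$ is constant and nonzero there, so that the $|K|$ points of a line in $V$ not meeting $U_1$ at all must be distributed among the remaining $U_i$, one point each at most.
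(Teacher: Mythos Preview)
Your proposal circles around a valid idea---restrict to an affine line and count which $U_i$ can meet it---but you never close the gap you yourself flag. The fix is the standard irredundancy reduction: if some $U_j$ is contained in $\bigcup_{i\neq j} U_i$, drop it; this terminates (at $m=1$ we have $V=U_1$, impossible), so we may assume each $U_j$ contains a point $v_j \notin \bigcup_{i\neq j} U_i$. Now take $v = v_1 \in U_1\setminus\bigcup_{i\geq 2}U_i$ and any $w\notin U_1$. The $|K|$ points $w+tv$, $t\in K$, are never in $U_1$ (since $v\in U_1$, $w\notin U_1$), and for each $i\geq 2$ at most one such point lies in $U_i$ (two would force $v\in U_i$). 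Hence $|K|\leq m-1$, so $m>|K|$. Your write-up reaches for this but keeps restarting; the missing ingredient is exactly the irredundancy step that guarantees the good choice of $v$. The $2$-plane/polynomial route has the same defect: you cannot select $W\not\subseteq U_i$ for all $i$ without first knowing the covering is irredundant (or proving the theorem in lower dimension), and your sketch of the ``short descending induction'' is not an argument.

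For comparison, the paper's proof is a two-line cardinality count exploiting that $K$ is finite: each proper $U_i$ satisfies $|U_i|\leq |V|/|K|$, and since $0$ lies in every $U_i$ (and $m\geq 2$, else $V=U_1$), one has $|V| < \sum_i |U_i| \leq m\,|V|/|K|$, whence $m>|K|$. This is far simpler than your route and is all that is needed here. Your approach, once repaired as above, has the advantage of working over arbitrary (possibly infinite) fields, yielding the classical fact that a vector space over an infinite field is never a finite union of proper subspaces; but for the present paper, where $K$ is finite throughout, that extra generality is not used.
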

		\begin{proof}
			For any $i \in \myset{m}$, $\dim_K U_i \leq \dim_K V - 1$ and hence $\card{U_i} \leq \frac{\card{V}}{\card{K}}$. Thus we have
			\begin{equation*}
				\card{V} < \sum_{i =1}^m \card{U_i} \leq m \frac{\card{V}}{\card{K}}
			\end{equation*} that implies $m > \card{K}$.
		\end{proof}

		\begin{lemma}\label{minimum-size-of-space-equation}
			Let $U_1,\dots, U_r, V_1, \dots, V_s$ be different spaces over $K$. Assume that $a_1,\dots, a_r, b_1,\dots, b_s > 0$ and
			\begin{equation*}
				\sum_{i = 1}^{r} a_i \id_{U_i} = \sum_{i = 1}^{s} b_i \id_{V_i} \,\, .
			\end{equation*}
			Then $\max\{r,s\}$ is greater than the cardinality of ${K}$.
		\end{lemma}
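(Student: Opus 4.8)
The plan is to extract from the functional identity a covering of one subspace by proper subspaces and then invoke \Cref{minimum-covering-number-of-one-space}. First I would record the harmless reductions: evaluating the identity at $0$ gives $\sum_{i=1}^r a_i = \sum_{j=1}^s b_j$, and since all coefficients are positive this shows $r\ge 1$ exactly when $s\ge 1$, so we may assume $r,s\ge 1$ (the case $r=s=0$ being vacuous). By symmetry of both the identity and the target inequality $\max\{r,s\}>\card{K}$ under interchanging the two families of spaces, I may also fix notation so that an inclusion-maximal member $W$ of $\{U_1,\dots,U_r,V_1,\dots,V_s\}$ is one of the $U_i$, say $W=U_1$. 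Moreover $W\ne\{0\}$, since otherwise every space in the list lies in $\{0\}$ and hence equals $\{0\}$, contradicting pairwise distinctness together with $r+s\ge 2$.

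The core step is to show $U_1=\bigcup_{j=1}^s (U_1\cap V_j)$ with every $U_1\cap V_j$ a proper subspace of $U_1$. For the covering, take $x\in U_1$: if $x\ne 0$ then evaluating the identity at $x$ produces a left-hand side of size at least $a_1>0$, so $\id_{V_j}(x)=1$ for some $j$, that is, $x\in V_j$; and $0$ lies in every $V_j$ anyway. For properness, if $U_1\cap V_j=U_1$ then $U_1\subseteq V_j$, and maximality of $U_1$ forces $U_1=V_j$, contradicting distinctness. Now $U_1$ is a nonzero space written as a union of $s$ proper subspaces (repetitions among them are harmless in \Cref{minimum-covering-number-of-one-space}), so that lemma gives $s>\card{K}$, and therefore $\max\{r,s\}\ge s>\card{K}$.

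I expect the only delicate points to be bookkeeping ones: that the reduction to $W=U_1$ really is justified by the full symmetry of hypothesis and conclusion, and that the properness argument genuinely uses the stronger hypothesis that all $r+s$ spaces are distinct --- not merely that the $U_i$ are distinct among themselves and the $V_j$ among themselves. Once these are settled the proof is a one-line reduction to the covering lemma, so I anticipate no real analytic difficulty.
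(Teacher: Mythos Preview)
Your proof is correct and follows essentially the same route as the paper: pick an inclusion-maximal space among all $U_i,V_j$, observe that the functional identity forces it to be covered by its intersections with the spaces of the other family, note these intersections are proper by distinctness, and apply \Cref{minimum-covering-number-of-one-space}. You supply some details the paper leaves implicit (the $r=s=0$ case, nonzeroness of the maximal space, and that properness genuinely uses distinctness across the two families), but the argument is the same.
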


		\begin{proof}
			Among the spaces $V_1, \dots, V_s, U_1, \dots, U_r$ choose one that is maximal under inclusion. It is either $V_i$ for some $i \in \myset{s}$, or $U_j$ for some $j \in \myset{t}$. In the first case $V_i = \bigcup_{j = 1}^{r} (V_i \cap U_j)$, where for all $j \in \myset{r}$, $V_i \cap U_j \subset V_i$. From \Cref{minimum-covering-number-of-one-space}, $r > \card{K}$. Similarly, in the second case $s > \card{K}$.
		\end{proof}

		\begin{theorem}\label{theorem-less-then-m}
			Let $L$ be a finite field and let $K$ be a proper subfield of $L$. Let $m \leq \card{K}$ and let $C$ be a $K$-linear code in $L^m$. Any $K$-linear code isometry is extendible. Moreover, for any $m > \card{K}$ there exists a code in $L^m$ that has an unextendible $K$-linear isometry.
		\end{theorem}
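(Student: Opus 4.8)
For the first assertion, fix a $K$-linear code $C \subseteq L^m$ with $m \le \card{K}$ and a $K$-linear isometry $f \colon C \to L^m$, and introduce the maps $\lambda$, $\mu$, the matrices $A$, $A'$ and the tuples $\V = (V_1,\dots,V_m)$, $\U = (U_1,\dots,U_m)$ exactly as in the paragraph preceding \Cref{isometry-main-criterium}. By \Cref{extendibility-criterium} it is enough to prove $\U \sim \V$, and \Cref{isometry-main-criterium} already gives us
\begin{equation*}
	\sum_{i=1}^m \frac{1}{\card{V_i}}\id_{V_i} = \sum_{i=1}^m \frac{1}{\card{U_i}}\id_{U_i}\;.
\end{equation*}
The plan is to collect equal subspaces, cancel the ones common to both sides, and then confront the remaining identity with the covering bound \Cref{minimum-size-of-space-equation}, which will be incompatible with $m \le \card{K}$.

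Carrying this out: for a subspace $W$ of $U$ let $a_W$ and $b_W$ count the indices $i$ with $V_i = W$ and with $U_i = W$ respectively, so that $\sum_W a_W = \sum_W b_W = m$ and the displayed equality becomes $\sum_W \frac{a_W - b_W}{\card{W}}\id_W = 0$. Assume toward a contradiction that $\U \not\sim \V$, i.e.\ $a_W \ne b_W$ for some $W$. Evaluating the zero function at the element $0 \in U$, which lies in every $W$, gives $\sum_W \frac{a_W - b_W}{\card{W}} = 0$, so the families $\mathcal{A} = \{W : a_W > b_W\}$ and $\mathcal{B} = \{W : a_W < b_W\}$ are both nonempty (a sum of real numbers that are all $\ge 0$, or all $\le 0$, and not all $0$, cannot vanish). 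Moving the negative part across yields
\begin{equation*}
	\sum_{W \in \mathcal{A}} \frac{a_W - b_W}{\card{W}}\id_W = \sum_{W \in \mathcal{B}}\frac{b_W - a_W}{\card{W}}\id_W\;,
\end{equation*}
an equality of positive combinations of indicator functions of pairwise distinct subspaces. \Cref{minimum-size-of-space-equation} then forces $\max\{\card{\mathcal{A}}, \card{\mathcal{B}}\} > \card{K}$. On the other hand $a_W \ge 1$ for every $W \in \mathcal{A}$, hence $\card{\mathcal{A}} \le \sum_{W \in \mathcal{A}} a_W \le \sum_W a_W = m$, and likewise $\card{\mathcal{B}} \le m$; so $\max\{\card{\mathcal{A}},\card{\mathcal{B}}\} \le m \le \card{K}$, a contradiction. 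Therefore $\U \sim \V$, and $f$ is extendible.

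For the second assertion, fix $m > \card{K}$ and set $t = m - \card{K} - 1 \ge 0$. I would start from the codes $C_1, C_2 \subseteq L^{\card{K}+1}$ and the $K$-linear isometry $f \colon C_1 \to C_2$ of \Cref{counter-example-existence}, append $t$ zero coordinates to every codeword to obtain $C_1', C_2' \subseteq L^m$, and let $f' \colon C_1' \to C_2'$ act as $f$ on the first $\card{K}+1$ coordinates and by the identity on the last $t$. Then $f'$ is $K$-linear and $\wt(f'(x)) = \wt(x)$ for all $x$, so $f'$ is a $K$-linear isometry; but it is unextendible, because $C_2'$ has exactly $t+1$ identically-zero coordinates (the last column of the generator matrix of $C_2$ in \Cref{counter-example-existence} together with the $t$ appended ones), whereas $C_1'$ has only the $t$ appended identically-zero coordinates, and a $K$-monomial map --- a permutation of coordinates followed by maps in $\Aut_K(L)$, all of which fix $0$ --- sends identically-zero coordinates to identically-zero coordinates and conversely, hence cannot realize $f'$.

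The substantive content is already in hand --- \Cref{thm-weight-representation} and \Cref{isometry-main-criterium} reduce the isometry condition to the single functional equation \eqref{eq-main-counting-space}, and \Cref{minimum-covering-number-of-one-space}--\Cref{minimum-size-of-space-equation} supply the covering bound --- so I do not expect a genuinely hard step. The point that requires the most care is the cancellation in the first half: one has to be sure that after removing the subspaces common to both sides the two remaining positive combinations are still nonempty and still involve at most $m$ distinct subspaces each, since that is exactly what lets the hypothesis $m \le \card{K}$ contradict \Cref{minimum-size-of-space-equation}. In the second half the only verification needed is that padding with zeros preserves both the isometry property and the coordinate-count obstruction, as indicated above.
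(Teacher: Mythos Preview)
Your proposal is correct and follows essentially the same route as the paper: reduce extendibility to equivalence of the tuples $\U,\V$ via \Cref{extendibility-criterium}, use \Cref{isometry-main-criterium} to obtain the indicator-function identity, group and cancel repeated subspaces, and then invoke \Cref{minimum-size-of-space-equation} to force $m>\card{K}$; for the second assertion the paper pads \Cref{counter-example-existence} with arbitrary extra columns rather than zero columns, but your zero-padding with the explicit coordinate-count obstruction is a perfectly good specialisation. Your treatment is in fact slightly more careful than the paper's in verifying that both sides remain nonempty after cancellation.
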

		\begin{proof}
			Assume that there exist a $K$-linear code $C \subseteq L^m$ and an unextendible $K$-linear isometry $f : C \rightarrow L^m$. Let $(\U,\V)$ be two tuples of spaces that correspond to some basis of $C$ and the map $f$.
			Since $f$ is an isometry, \Cref{isometry-main-criterium} implies that \cref{eq-main-counting-space} holds and hence $(\U,\V)$ is a solution.
			By \Cref{extendibility-criterium}, the solution $(\U,\V)$ is nontrivial.
			Grouping equal terms on each side of \cref{eq-main-counting-space} we get,
			\begin{equation*}
			\sum_{i=1}^r a_i \id_{V'_i} = \sum_{i=1}^s b_i \id_{U'_i} \;,
			\end{equation*}
			where $V'_i$, $U'_j$ are $K$-linear spaces, $a_i,b_j >0$, for $i \in \myset{r}$, $j \in \myset{s}$, the spaces $V_i'$, for $i \in \myset{r}$, are all different and the spaces $U_i'$, for $i \in \myset{s}$ are all different.
			Note that $r,s \leq m$.
			Eliminate equal terms from different sides and make a renumbering of the spaces on both sides of the equation. The resulting equation is the following,
			\begin{equation*}
			\sum_{i=1}^{r'} a'_i \id_{V''_i} = \sum_{i=1}^{s'} b'_i \id_{U''_i} \;,
			\end{equation*}
			where $V''_i$, $U''_j$ are $K$-linear spaces, $a'_i,b'_j >0$, for $i \in \myset{r'}$, $j \in \myset{s'}$, and the spaces $V_i'', U_j''$, for $i \in \myset{r'}$, $i \in \myset{s'}$ are all different.
			In the last equation, all the conditions of \Cref{minimum-size-of-space-equation} are satisfied and therefore $\max\{r',s'\} > \card{K}$.
			Note that $r'\leq r\leq m$ and $s' \leq s\leq m$. Therefore $m> \card{K}$.
			
			For $m = \card{K} + 1$ we have already introduced a $K$-linear code in $L^m$ with unextendible $K$-linear isometry in \Cref{ex-1}. Evidently, for $m > \card{K} + 1$ such a pair of codes and an isometry is constructed by adding a set of arbitrary columns to the generator matrices of the two codes from \Cref{ex-1}.
		\end{proof}
		
		Of course, the techniques developed in the paper can be used to prove the classical MacWilliams Extension Theorem for linear codes. For the case $K = L$ we can refine \Cref{theorem-less-then-m}.
		
		\begin{proof}[Proof of the MacWilliams Extension Theorem]\label{proof-mac-willams}
			Due to \Cref{extendibility-criterium} and \Cref{isometry-main-criterium}, where the field $K$ is considered to be $L$, it is enough to show that all solutions $(\U,\V)$ of \cref{eq-main-counting-space} are trivial. By the definition of column space, for all $i \in \myset{m}$, $\dim_K V_i \leq n$ and $\dim_K U_i \leq n$, where $n = [L:K] = 1$. Therefore the spaces in $\U$ and $\V$ are just one-dimensional or zero spaces and hence a solution of \cref{eq-main-counting-space} can be only trivial.
		\end{proof}
		
		It is worth to note that, except the case $K = L$, in the paper we never used the fact that $L$ is a field. We only required $L$ to be a vector space over $K$. As we mentioned above, the character techniques allows the generalization of properties of codes over fields to the case of codes over rings and over modules. The generalization of \Cref{extendibility-criterium} and \Cref{isometry-main-criterium} to the case of codes linear over modules is possible and will appear in our further works.
		
	\section{Unextendible additive isometries}
		In this section we give a description of one family of nontrivial solutions of \cref{eq-main-counting-space} in the case of $m = q+1$, where by $q$ we denoted the cardinality of the field $K$. 
		As we mentioned above, nontrivial solutions of \cref{eq-main-counting-space} are in the correspondence with unextendible $K$-linear code isometries (see \Cref{extendibility-criterium} and \Cref{isometry-main-criterium}).
		
		In \Cref{minimum-covering-number-of-one-space} we proved that the covering of a space by proper subspaces is possible only if the number of subspaces is not less than $q + 1$. The following lemma gives the description all such possible covering.
		\begin{lemma}\label{minimum-space-covering-description}
			Let $V$ be a vector space over $K$ of dimension $k\geq 2$. Let $U_i, i \in \myset{q+1}$ be proper subspaces of $V$. If $V = \bigcup_{i = 1}^{q+1} U_i$, then there exists a subspace $S \subset V$ of dimension $k-2$ such that $\{U_1, \dots, U_{q+1}\}$ is the set of all subspaces of dimension $k-1$ that contain $S$.
		\end{lemma}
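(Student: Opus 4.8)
\emph{Outline and reduction.} The strategy is to first reduce to an irredundant covering by pairwise distinct subspaces, then show each $U_i$ must be a hyperplane, and finally extract the pencil structure from a counting argument inside a single affine coset. Write $q=\card K$ and $k=\dim_K V$. If two of the $U_i$ coincide, or if some $U_i$ is contained in the union of the others, then discarding it leaves a covering of $V$ by $q$ proper subspaces, contradicting \Cref{minimum-covering-number-of-one-space}. So we may assume the $U_i$ are pairwise distinct and the covering is irredundant; in particular each $U_i$ has a \emph{private} vector $v_i\in U_i$ that lies in no $U_j$ with $j\neq i$.

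\emph{Each $U_i$ is a hyperplane.} Fix $i=1$ with its private vector $v_1$, and take any $w\notin U_1$. The affine line $\ell=\{w+tv_1:t\in K\}$ avoids $U_1$ (otherwise subtracting a multiple of $v_1\in U_1$ would place $w$ in $U_1$), and for $j\neq1$ it meets $U_j$ in at most one point because $v_1\notin U_j$. Its $q$ points are covered by $U_2,\dots,U_{q+1}$, so $\ell$ meets every one of them: otherwise the $q$ points of $\ell$ would be covered by the remaining $q-1$ subspaces, each catching at most one point. Passing to $\bar{V}:=V/Kv_1$, this says $\bar{w}\in\bar{U}_j$ for every $w\notin U_1$ and every $j\neq1$; since $v_1\in U_1$ we have $\{\bar{w}:w\notin U_1\}=\bar{V}\setminus\bar{U}_1$, hence $\bar{V}=\bar{U}_1\cup\bar{U}_j$. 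Since $\bar{U}_1$ is a proper subspace of the nonzero space $\bar{V}$, and a vector space over a field is not the union of two proper subspaces (\Cref{minimum-covering-number-of-one-space} with $m=2$), we must have $\bar{U}_j=\bar{V}$, i.e.\ $U_j+Kv_1=V$; because $v_1\notin U_j$ this forces $\dim_K U_j=k-1$. Letting $i$ range over all indices, every $U_i$ is a hyperplane.

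\emph{The pencil.} Now take private vectors $v_1\in U_1$ and $v_2\in U_2$; they are $K$-independent since $v_2\notin U_1\ni v_1$, so $\Pi:=Kv_1+Kv_2$ is a $2$-dimensional subspace. Intersecting the covering with $\Pi$ gives $\Pi\cap U_1=Kv_1$ and $\Pi\cap U_2=Kv_2$, while for $i\geq3$ the subspace $\Pi\cap U_i$ is none of $Kv_1,Kv_2,\Pi$ (each would put $v_1$ or $v_2$ into $U_i$), so it is $\{0\}$ or a line $K(v_1+cv_2)$ with $c\in K^*$; since $\Pi=\bigcup_i(\Pi\cap U_i)$ must cover the $q-1$ lines $K(v_1+cv_2)$ and there are exactly $q-1$ indices $i\geq3$, we obtain $\Pi\cap U_i=K(v_1+c_iv_2)$ with $c_3,\dots,c_{q+1}$ running through $K^*$. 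In particular $\Pi\cap(U_1\cap U_2)=0$; as $U_1,U_2$ are distinct hyperplanes, $S:=U_1\cap U_2$ has codimension $2$ and $V=S\oplus\Pi$. It remains to prove $S\subseteq U_j$ for all $j$. This is clear for $j=1,2$ and trivial when $k=2$ (then $S=0$), so suppose $k\geq3$ and, for a contradiction, $S\not\subseteq U_j$ for some $j\geq3$; then $S\cap U_j$ is a hyperplane of $S$, of cardinality $q^{k-3}$. Put $w:=v_1+c_jv_2\in U_j$; by the description of the $\Pi\cap U_i$ we have $w\notin U_i$ for every $i\neq j$. Consider the coset $w+S$, which has $q^{k-2}$ points. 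For $i\neq j$ the set $(w+S)\cap U_i$ is empty if $S\subseteq U_i$ (because $w\notin U_i$) and otherwise empty or a translate of $S\cap U_i$, hence of cardinality at most $q^{k-3}$; and $(w+S)\cap U_j=w+(S\cap U_j)$ has cardinality $q^{k-3}$. The pieces for $i=1,2$ and for those $i\geq3$ with $S\subseteq U_i$ are empty, so $w+S$ is covered by at most $q-1$ sets of cardinality at most $q^{k-3}$, whence $q^{k-2}\leq(q-1)q^{k-3}$, which is absurd. Therefore $S\subseteq U_j$ for all $j$, so $S\subseteq\bigcap_iU_i\subseteq U_1\cap U_2=S$, i.e.\ $S=\bigcap_{i=1}^{q+1}U_i$ has codimension $2$, and $U_i=S\oplus(\Pi\cap U_i)$ exhibits each $U_i$ as $S$ together with a line of $\Pi$. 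Finally, a hyperplane $H$ contains $S$ if and only if $H\cap\Pi$ is a line (then $H+\Pi=V$ and $H=S\oplus(H\cap\Pi)$), so the hyperplanes through $S$ correspond bijectively to the $q+1$ lines of $\Pi$; since the subspaces $\Pi\cap U_i$ are exactly those $q+1$ lines, $\{U_1,\dots,U_{q+1}\}$ is precisely the set of $(k-1)$-dimensional subspaces of $V$ containing $S$, which is the assertion.

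\emph{Main obstacle.} The reduction and the quotient argument of the second paragraph are routine. The heart of the proof is the third paragraph, and within it the coset count that excludes $S\not\subseteq U_j$; everything after that is bookkeeping about the lines of the plane $\Pi$.
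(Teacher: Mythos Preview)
Your proof is correct and takes a genuinely different route from the paper's.

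The paper argues entirely by cardinality bookkeeping: first a crude count shows at most one $U_i$ can have dimension $\leq k-2$, then a telescoping estimate with the sets $\bar U_i=\bigcup_{j<i}U_j$ rules out even that one and forces all the equalities $U_i\cap\bar U_i=U_i\cap U_1$, from which $S=U_1\cap U_2$ drops out immediately. Your argument is more geometric. You first reduce to an irredundant covering (via \Cref{minimum-covering-number-of-one-space}), then use the affine-line trick together with the quotient by a private vector to force every $U_j$ to be a hyperplane; this replaces the paper's two-stage cardinality count by a single structural observation. For the pencil, you build a complementary plane $\Pi$ from two private vectors, read off that the $\Pi\cap U_i$ are exactly the $q+1$ lines of $\Pi$, and then exclude $S\not\subseteq U_j$ by counting inside a single coset $w+S$; the paper instead obtains $S\subset U_i$ as an automatic consequence of the equalities $U_i\cap\bar U_i=U_i\cap U_1$. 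What your approach buys is a clearer geometric picture (the correspondence between hyperplanes through $S$ and lines of $\Pi$ is made explicit), and the affine-line/quotient step is reusable in other covering problems; what the paper's approach buys is brevity and a proof that never leaves elementary set-cardinality arithmetic. One small remark: your sentence ``Letting $i$ range over all indices'' is slightly elliptical, since the argument with the private vector of $U_1$ only shows that $U_j$ is a hyperplane for $j\neq 1$; of course repeating it with any other index handles $U_1$, but you might say so explicitly.
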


		\begin{proof}
			Assume that there are at least two spaces, let them be $U_q$ and $U_{q+1}$, with dimensions smaller than $k-1$. Then $q^k = \card{V} = \card{\bigcup_{i = 1}^{q+1} U_i} < \sum_{i = 1}^{q-1} \card{U_i} + \card{U_{q}} + \card{U_{q+1}} \leq (q - 1)q^{k -1} + 2q^{k -2} = q^k - q^{k-1} + 2q^{k-2}$, which is not true since $q \geq 2$. Therefore there exists at most one space $U_i$ with $\dim_K U_i \leq k-2$, $i \in \myset{q+1}$. Assume it exists and let it be $U_{q+1}$. For $i \in \{2,\dots,q+1\}$ define a set $\bar{U}_i = \bigcup_{j < i} U_j$ and notice that for $i \in \{2,\dots,q\}$, $\card{U_i \setminus \bar{U}_i} \leq \card{U_i} - \card{U_i \cap U_1} = q^{k-1} - q^{k - 2}$, because $\dim_K U_1 \cap U_i = k - 2$. The equality $\card{U_i \setminus \bar{U}_i} = q^{k-1} - q^{k - 2}$ holds if and only if $U_i \cap \bar{U}_i = U_i \cap U_1$, where $i \in \{2,\dots,q\}$. Obviously, $\card{U_{q+1} \setminus \bar{U}_{q+1}} \leq q^{k-2} - 1$.
			In the equality $V = U_1 \cup \bigcup_{i=2}^{q+1}(U_i \setminus \bar{U}_i)$ all sets in the union are disjoint. Thus
			\begin{equation*}
				q^k = \card{V} = \card{U_1} + \sum_{i =2}^{q+1} \card{U_i \setminus \bar{U}_i} \leq q^{k-1} + (q-1)(q^{k-1} - q^{k-2}) + q^{k-2} - 1\;.
			\end{equation*}
			Regrouping the terms we get $2q^{k-2} \geq q^{k-1} + 1$, which gives a contradiction. Hence, $\dim_K U_i = k -1$ for all $i \in \myset{q+1}$ and we can refine the inequality,
			\begin{equation*}
				q^k = \card{V} = \card{U_1} + \sum_{i =2}^{q+1} \card{U_i \setminus \bar{U}_i} \leq q^{k-1} + q(q^{k-1} - q^{k-2}) = q^k\;.
			\end{equation*}
			This implies that for all $i \in \{2, \dots, q+1\}$, $\card{U_i \setminus \bar{U}_i} = q^{k-1} - q^{k-2}$ and therefore $U_i \cap \bar{U}_i = U_i \cap U_1$.
			Define the space of dimension $k-2$, $S = U_2 \cap U_1$.
			The following equalities hold, $U_1 \cap U_i = U_i \cap \bar{U}_i = U_2 \cap (U_i \cap \bar{U}_i)  = U_2 \cap (U_i \cap U_1) = U_i \cap S$, for all $i \in \{2, \dots, q+1\}$. So $U_i \cap S$ has dimension $k-2$, which implies $S \subset U_i$, for all $i \in \{2,\dots, q+1\}$.
			Evidently, the spaces $U_i$ for $i \in \myset{q+1}$ include all the spaces that are strictly between $S$ and $V$.
		\end{proof}
		
		For a pair of spaces $S \subset V$ of dimensions $n - 2$ and $n$ correspondingly define two tuples of spaces $\U^A = (U^A_1, \dots, U^A_{q+1})$ and $\V^A = (V^A_1,\dots, V^A_{q+1})$ in the following way. Let $V^A_1 = \dots = V^A_q = V$, $V^A_{q+1} = S$ and let $U^A_1, \dots, U^A_{q+1}$ be all different hyperplanes in $V$ that contain $S$.
		
		\begin{proposition}\label{description-of-solutions-with-diff-dimensions}
		Let $\U$ and $\V$ be two tuples of spaces such that
		$$\max_{i \in \myset{q+1}} \dim_K V_i > \max_{i \in \myset{q+1}} \dim_K U_i\;.$$
		The pair $(\U, \V)$ is a nontrivial solution of \cref{eq-main-counting-space} if and only if there exist spaces $V$ and $S$ of dimension $k$ and $k-2$ correspondingly, such that $\U \sim \U^A$ and $\V \sim \V^A$.
		\end{proposition}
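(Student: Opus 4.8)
The plan is to handle the two directions separately. For the ``if'' direction I would first note that the expression $\sum_i\card{V_i}^{-1}\id_{V_i}$ depends on $\V$ only through the multiset of its entries (and likewise for $\U$), and that $\sim$ is transitive, so it suffices to check directly that $(\U^A,\V^A)$ is a nontrivial solution of \cref{eq-main-counting-space}. Nontriviality is immediate, since $\V^A$ has an entry of dimension $k$ while $\U^A$ does not, so no permutation can match the two tuples. For the identity I would evaluate both sides at a point $x$ in each of the three cases $x\notin V$, $x\in V\setminus S$, $x\in S$, using that $V/S$ is two--dimensional over $K$: a nonzero coset of $S$ lies on exactly one of the $q+1$ hyperplanes through $S$, whereas $S$ lies on all of them.

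For the ``only if'' direction, set $k=\max_i\dim_K V_i$, so that by hypothesis $k>\max_i\dim_K U_i$ and hence $\dim_K U_i\le k-1$ for all $i$; fix $V=V_j$ with $\dim_K V=k$. (Here $k\ge 2$: $k=0$ contradicts the strict inequality, and $k=1$ is excluded by the covering step below.) After grouping equal terms on each side and cancelling the common ones, \cref{eq-main-counting-space} takes the form $\sum_i\frac{a_i}{\card{V'_i}}\id_{V'_i}=\sum_j\frac{b_j}{\card{U'_j}}\id_{U'_j}$ with the $V'_i$ pairwise distinct, the $U'_j$ pairwise distinct, no $V'_i$ equal to a $U'_j$, all $a_i,b_j\ge 1$, and with $V$ surviving on the left (say $V=V'_1$), because no $U_i$ can equal $V$. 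The crucial move is to restrict this identity to $V$: there $\id_V$ is the constant $1$, while every other space occurring --- the spaces $V'_i\cap V$ for $i\ne 1$ and all $U'_j\cap V$ --- is a \emph{proper} subspace of $V$ (using $\dim_K V'_i\le k$ with $V'_i\ne V$, and $\dim_K U'_j\le k-1$). Hence the right--hand side is a nonnegative combination of indicators of proper subspaces of $V$ which is everywhere $\ge a_1/\card{V}>0$, so the $U'_j\cap V$ cover $V$. By \Cref{minimum-covering-number-of-one-space} at least $q+1$ of them are distinct, whereas there are only $s\le q+1$ of the $U'_j$; therefore $s=q+1$, all $b_j=1$, the $U'_j\cap V$ are pairwise distinct, and by \Cref{minimum-space-covering-description} they are exactly the $q+1$ hyperplanes of $V$ through a subspace $S$ of dimension $k-2$. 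Since $\dim_K U'_j\le k-1=\dim_K(U'_j\cap V)$, in fact $U'_j=U'_j\cap V\subseteq V$, so the $U'_j$ are these $q+1$ hyperplanes, nothing cancelled on the $\U$--side, and $\U\sim\U^A$.

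To finish I would make the right--hand side explicit --- a case check as before gives $\sum_j\card{U'_j}^{-1}\id_{U'_j}=\frac{q}{\card{V}}\id_V+\frac{1}{\card{S}}\id_S$ --- and observe that, being supported on $V$, so is $\sum_i\frac{a_i}{\card{V'_i}}\id_{V'_i}$; hence every $V'_i\subseteq V$, and $V'_i\subsetneq V$ for $i\ne 1$. Subtracting $\frac{a_1}{\card{V}}\id_V$, the function $\frac{q-a_1}{\card{V}}+\frac{1}{\card{S}}\id_S$ (read on $V$) equals the nonnegative combination $\sum_{i\ne 1}\frac{a_i}{\card{V'_i}}\id_{V'_i}$ of indicators of proper subspaces of $V$. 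If $a_1\le q-1$ this function is everywhere positive, so at most $q$ proper subspaces cover $V$, contradicting \Cref{minimum-covering-number-of-one-space}; and $a_1\ge q+1$ is impossible, since then $a_1=q+1$ and no other $V'_i$ exists, making the leftover identically zero, whereas $\frac{q-(q+1)}{\card{V}}+\frac{1}{\card{S}}\id_S$ is not. Hence $a_1=q$, the leftover $\sum_{i\ne 1}\frac{a_i}{\card{V'_i}}\id_{V'_i}=\frac{1}{\card{S}}\id_S$ is nonzero with total multiplicity $\le 1$, so there is a single further space $V'_2$ with $a_2=1$ and $\card{V'_2}^{-1}\id_{V'_2}=\card{S}^{-1}\id_S$, which forces $V'_2=S$. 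Thus $\V$ consists of $q$ copies of $V$ together with one copy of $S$, i.e.\ $\V\sim\V^A$.

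The hard part will be this final identification of the $\V$--side: pinning down that $V$ occurs with multiplicity exactly $q$ and that the unique remaining column space is precisely $S$ (and not, a priori, a union of several smaller subspaces). This is where one must re-apply the covering bound of \Cref{minimum-covering-number-of-one-space} to the reduced equation restricted to $V$ and balance the resulting multiplicities against the hard ceiling $q+1$.
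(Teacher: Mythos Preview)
Your proof is correct and follows essentially the same route as the paper's: restrict the identity to a space $V$ of maximal dimension, invoke \Cref{minimum-covering-number-of-one-space} and \Cref{minimum-space-covering-description} to force the $U$-side to be exactly the $q+1$ hyperplanes of $V$ through some $S$, and then identify the $V$-side from the resulting explicit right-hand side. The only difference is cosmetic: where the paper peels off copies of $V$ one at a time by iterating the covering bound on the reduced equation, you handle the $V$-side in a single case analysis on the multiplicity $a_1$ of $V$ against the ceiling $\sum_i a_i\le q+1$.
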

		\begin{proof}
			Prove the only if part.
			Without loss of generality, assume that $\dim_K V_1 = k = \max_{i \in \myset{q+1}} \dim_K V_i$. Obviously, $k \geq 2$ and
			from \cref{eq-main-counting-space}, $V_1 = \bigcup_{i=1}^{q+1} (U_i \cap V_1)$, where $U_i \cap V_1 \subset V_1$ for all $i \in \myset{q+1}$.
			From \Cref{minimum-space-covering-description} there exists a subspace $S \subset V_1$ such that $\dim_K S = k - 2$ and $S \subset U_i \cap V_1 \subset V_1$, $\dim_K U_i \cap V_1 = k - 1$ and all the spaces $U_i \cap V_1$ are different for $i \in \myset{q+1}$.
			From the conditions $\dim_K U_i \cap V_1 = k -1$, $\dim_K U_i < k$ and $U_i \cap V_1 \subset V_1$ we deduce $U_i = U_i \cap V_1 \subset V_1$, where $i \in \myset{q+1}$.
			Since $V_1 = \bigcup_{i=1}^{q+1} U_i$ it is easy to see that $\id_{V_1} + q \id_{S} = \sum_{i = 1}^{q+1} \id_{U_i}$. \Cref{eq-main-counting-space} can be rewritten as
			\begin{equation*}
				\frac{1}{q^k}\id_{V_1} + \sum_{i = 2}^{q+1} \frac{1}{\card{V_i}} \id_{V_i} =\frac{1}{q^{k-1}} \sum_{i = 1}^{q+1} \id_{U_i} = \frac{1}{q^{k-1}}\id_{V_1} + \frac{1}{q^{k-2}} \id_{S}\;.
			\end{equation*}
			Subtracting $q^{-k}\id_{V_1}$ from both sides we get
			\begin{equation*}
				\sum_{i = 2}^{q+1} \frac{1}{\card{V_i}} \id_{V_i} = \frac{q-1}{q^k} \id_{V_1} +  \frac{1}{q^{k-2}} \id_S \;.
			\end{equation*}
			Since $q > 1$, $V_1 = \bigcup_{i = 2}^{q+1} (V_i \cap V_1)$. From \Cref{minimum-space-covering-description}, considering the fact that the number of terms from both sides is less than $q+1$, there exists $i \in \{2,\dots,q+1\}$ such that $V_i = V_1$. Assume $V_2 = V_1$ and reduce the equation,
			\begin{equation*}
				\sum_{i = 3}^{q+1} \frac{1}{\card{V_i}} \id_{V_i} = \frac{q-2}{q^k} \id_{V_1} +  \frac{1}{q^{k-2}} \id_{S}\;.
			\end{equation*}
			Repeating the procedure $q - 2$ more times we get that $V_i = V_1$ for every $i \in \myset{q}$ and the reduced equation becomes $\frac{1}{\card{V_{q+1}}} \id_{V_{q+1}} =  \frac{1}{q^{k-2}} \id_{S}$. Obviously, $V_{q+1} = S$. Defining $V = V_1$, we proved that $\U \sim \U^A$ and $\V \sim \V^A$.
			
			In the other direction, easy to see that the pair $(\U^A, \V^A)$ is really a solution of \cref{eq-main-counting-space}.
		\end{proof}

		Having a family of nontrivial solutions for $m = q+1$ we can build a family of unextendible $K$-linear code isometries for codes of length $q+1$. The unextendible additive isometry presented in \Cref{counter-example-existence} is a particular case, which corresponds to the solution $(\U^A, \V^A)$ with $V = K^2$ and $S = \{0\}$.
		
		The full description of nontrivial solutions of \cref{eq-main-counting-space} will appear in the further paper.

		\footnotesize
		\bibliographystyle{ieeetr}
		\bibliography{biblio}
\end{document}